\documentclass[a4paper,11pt]{article}
\usepackage{tikz}
\usepackage[makeroom]{cancel}
\usepackage[table]{xcolor}
\usepackage{amsthm}
\usepackage{amsmath}
\usepackage{amsfonts}
\usepackage{geometry}
\usepackage[utf8]{inputenc}
\usepackage{enumerate} 
\usepackage{subfig}
\usepackage{tikz}
\usepackage{tikz-3dplot}
\usepackage{color}
\usepackage{float}
\usepackage{leftidx}
\usepackage{bigints}
\usepackage[mathscr]{euscript}
\usepackage{amssymb,amsmath,dsfont,url,epsfig}
\usepackage{titlesec, blindtext, color}
\usepackage{graphicx, psfrag, epsf, times}
\definecolor{gray75}{gray}{0.75}

\newcommand{\sln}{\linespread{1}}
\newcommand*{\email}[1]{\href{mailto:#1}{\nolinkurl{#1}} } 
\titleformat{\chapter}[block]{\LARGE\bfseries\sln}{Chapter \thechapter}{11pt}{\newline\huge\bfseries}
\newtheorem{thm}{Theorem}[section]
\newtheorem{rem}{Remark}[section]

\newtheorem{defn}{Definition}[section]

\newtheorem{que}{Question}
\newtheorem{lem}{Lemma}[section]
\newtheorem{proposition}{Proposition}[section]

\begin{document}
\title{Finsler structure of the Apollonian weak metric on the unit disc}
\author  {Alok Kumar Pandey\footnote{E-mail: aalok3201@bhu.ac.in; Centre for Interdisciplinary Mathematical Sciences, Banaras Hindu University, Varanasi-221005, India.},  Ashok Kumar\footnote{E-mail: ashokkumar@hri.res.in; Harish-Chandra Research Institute, A CI of Homi Bhabha National Institute, Chhatnag Road, Jhunsi, Prayagraj-211019, India.}   and  Bankteshwar Tiwari\footnote{E-mail: btiwari@bhu.ac.in; Centre for Interdisciplinary Mathematical Sciences, Banaras Hindu University, Varanasi-221005, India}}
\date{} \maketitle

\begin{abstract}
\noindent 
In this paper, we {\it find} the Finsler structure of the Apollonian weak metric on the open unit disc in $\mathbb{R}^2$, which turns out to be a Randers type Finsler structure and we call it as Apollonian weak-Finsler structure. In fact the Apollonian weak-Finsler structure is the deformation of the hyperbolic Poincar\'e metric in the unit disc by a closed $1$-form. As a cosequence, the trajectories of the geodesic of this Apollonian weak-Finsler structure pointwise agrees with the geodesic of hyperbolic Poincar\'e metric in the open unit disc. Further, we explicitly calculate its $S$-curvature, Riemann curvature, Ricci curvature and flag curvature. It turns out that the $S$-curvature of 
the Apollonian weak-Finsler structure in the unit disc is bounded below by $\frac{3}{2}$, while its flag curvature $K$ satisfies $-\infty< K<-1$, in particular, it becomes a Hadamard manifold.
\end{abstract}
{\footnotesize Keywords: Finsler structure, Apollonian weak metric, Randers metric, Flag curvature, Zermelo navigation.}\\
{\footnotesize Mathematical subject classification: 53B40, 53B60, 53C50.}
\section{Introduction}\label{sec1}
A weak metric on a set is a function which satisfies all axioms of a metric, except for the symmetry and separation axioms. The term weak metric was first introduced by Ribeiro \cite{RH} in 1943. Few well known examples of weak metrics are the Funk weak metric, the Apollonian weak metric and the Thurston metric. In \cite{APMT}, Papadopoulos and Troyanov introduce a weak metric, called the Apollonian weak metric on any subset of a Euclidean space. The arithmetic symmetrization of the Apollonian weak-metric is actually a   semi metric, called the Apollonian  semi metric, which  was introduced by Barbilian in 1934-35 and then re-discovered by Beardon \cite{BA} in 1995. It is defined for arbitrary domains in $\mathbb{R}^n$ and is M\"{o}bius invariant. 
In \cite{APMT}, Papadopoulos and Troyanov obtain the explicit formulas for the Apollonian weak metrics
in the upper-half plane and in the unit disk (see \cite[Theorem $1,2$]{APMT}. They have also pointed out that the Apollonian weak metrics are related to the conformal (Poincar\'e) model of the unit disk and the upper-half plane \cite{APMT}.

 \noindent Also, Papadopoulos and Troyanov observe that the isometry group of the Apollonian weak metric is quite different from the isometry group of the hyperbolic metric. On the other hand, they have shown that the hyperbolic lines are the geodesics in the unit disc for Apollonian weak metric \cite[Theorem 3]{APMT}. For a special class of plane domains, Beardon shows that the conformal Apollonian isometries are M\"{o}bius transformations. In \cite{ZI}, Ibragimov    shows that the Apollonian metric of a domain $D$ is either conformal at every point of $D$, at only one point of $D$ or at no point of $D$. The variational characterization of the Apollonian weak metric and Funk metric on the convex set in $\mathbb{R}^n$ has been investigated by Yamada \cite{SY}. \\
 
 \noindent A Finsler structure is defined on a differentiable manifold and consists of a family of Minkowski norms on the tangent spaces of the manifold (see Definition \ref{def 3.A01}). The importance of the Apollonian weak metric from a Finsler viewpoint is that it provides a natural and nontrivial example of a non-reversible Finsler structure arising  from classical conformal geometry and geometric function theory. A central Finsler-geometric question is: \textit{Given a distance-like function, does it arise from a Finsler structure?} For the Funk and Apollonian weak metrics, the answer is essentially yes in important domains such as the unit disk. This gives a concrete bridge between weak metrics, asymmetric distances, and smooth Finsler structures. This is conceptually important because many naturally occurring geometries are non-symmetric.
 Papadopoulos and Troyanov introduced tautological weak Finsler structure in a convex subset $ \Omega$ of  $\mathbb R^n$ and prove that the distance induced by it coincide with the  Funk weak metric on $\Omega$ (see \cite{HHG1}). In   \cite{SY}, variational formulas for the Finsler structure of the Funk and Apollonian weak metrices on the convex set in $\mathbb{R}^n$ has been obtained by Yamada. In \cite{AHB1}, various models of Funk-Finsler structure in Euclidean space with their curvature properties have been studied, where as in \cite{AHB1},  the explicit forms of Funk-Finsler structure in the hyperbolic plane geometry, which is of Randers type, with their curvature properties have been investigated.
 \\

 \noindent In this paper, we explicitly determine the Finsler structure associated with the Apollonian weak metric on the unit disc in $\mathbb R^2$, and show that it is of Randers type. It is interesting to note that the Finsler structure of the Funk weak metric on the unit disc is the deformation of the well known Klein-hyperbolic metric on the unit disc whereas the Finsler structure of the Apollonian weak metric metric on the unit disc is the deformation of the well known Poincar\'e-hyperbolic metrics on the unit disc, The study of the Finsler structures associated with the Apollonian weak metric on other convex domains in $\mathbb R^2$
  and, more generally, in $\mathbb R^n$, will be subject of the future study. 
 More precisely, in the present paper we obtain:
 
\begin{thm}\label{thm01}
\textnormal{    The Apollonian weak-Finsler structure of the Apollonian-weak metric in the unit disc $\mathbb{D}$ is a positive definite Randers structure given by}
\begin{equation*}
    \mathcal{F}_A(x,\xi)=\frac{|\xi|}{1-|x|^2}+\frac{\langle x,\xi\rangle}{1-|x|^2},
    \end{equation*}
   \textnormal{ here the first term on right hand side is the well known Poincar\'e metric $\frac{|\xi|}{1-|x|^2}:=\alpha(x,\xi)$ and the second term  $\frac{\langle x,\xi\rangle}{1-|x|^2}:=\beta(x,\xi)$ is a closed $1$-form given by $\beta=df$ with $f=-\frac{1}{2}\log(1-|x|^2)$}.
\end{thm}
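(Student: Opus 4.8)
The plan is to obtain $\mathcal{F}_A$ as the (forward) length element of the Apollonian weak metric and then recognise the resulting expression as a Randers metric. Recall that on $\mathbb{D}$ the Apollonian weak metric is
\[
a_{\mathbb{D}}(x,y)=\sup_{z\in\partial\mathbb{D}}\log\frac{|x-z|}{|y-z|},
\]
and that the Finsler structure it induces at $(x,\xi)\in T\mathbb{D}$ is $\mathcal{F}_A(x,\xi)=\lim_{t\to0^+}t^{-1}a_{\mathbb{D}}(x,x+t\xi)$. Since $a_{\mathbb{D}}(x,x)=0$, this is the right derivative at $t=0$ of $t\mapsto\sup_{z\in\partial\mathbb{D}}g(z,t)$, where $g(z,t)=\log|x-z|-\log|x+t\xi-z|$ is smooth in $(z,t)$ for $|t|$ small, because $\mathrm{dist}(x,\partial\mathbb{D})>0$ and $\partial\mathbb{D}$ is compact. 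The key observation is that $g(z,0)=0$ for \emph{every} $z\in\partial\mathbb{D}$, so the whole boundary is the maximising set at $t=0$; Danskin's theorem on the derivative of a max-function then gives
\[
\mathcal{F}_A(x,\xi)=\sup_{z\in\partial\mathbb{D}}\frac{\partial g}{\partial t}(z,0)=\sup_{z\in\partial\mathbb{D}}\frac{\langle z-x,\xi\rangle}{|z-x|^2}.
\]

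Next I would evaluate this supremum geometrically. Put $w=z-x$; the constraint $z\in\partial\mathbb{D}$ says that $w$ lies on the circle $C$ of radius $1$ centred at $-x$, which, because $|x|<1$, does not pass through the origin. The inversion $v\mapsto w=v/|v|^2$ in the unit circle is an involution that carries $C$ onto the circle $C'$ of centre $\frac{x}{1-|x|^2}$ and radius $\frac{1}{1-|x|^2}$, and it linearises the functional, since $\frac{\langle w,\xi\rangle}{|w|^2}=\langle v,\xi\rangle$. Because the maximum of a linear functional $\langle\,\cdot\,,\xi\rangle$ over a circle with centre $c_0$ and radius $\rho$ equals $\langle c_0,\xi\rangle+\rho|\xi|$, we conclude
\[
\mathcal{F}_A(x,\xi)=\Big\langle\frac{x}{1-|x|^2},\xi\Big\rangle+\frac{|\xi|}{1-|x|^2}=\frac{|\xi|}{1-|x|^2}+\frac{\langle x,\xi\rangle}{1-|x|^2},
\]
which is the expression in Theorem \ref{thm01}. (The same computation works verbatim for the unit ball in $\mathbb{R}^n$.)

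It remains to verify the structural assertions. Writing $\alpha(x,\xi)=\frac{|\xi|}{1-|x|^2}$ (the Poincar\'e metric, with metric tensor $a_{ij}=(1-|x|^2)^{-2}\delta_{ij}$ and inverse $a^{ij}=(1-|x|^2)^2\delta^{ij}$) and $\beta=b_i\,dx^i$ with $b_i=\frac{x_i}{1-|x|^2}$, we have $\mathcal{F}_A=\alpha+\beta$, i.e.\ a Randers-type expression. The $1$-form $\beta$ is exact, hence closed: a one-line computation gives $d\!\left(-\tfrac12\log(1-|x|^2)\right)=\frac{\langle x,dx\rangle}{1-|x|^2}=\beta$, with $f=-\tfrac12\log(1-|x|^2)$. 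Finally, a Randers expression $\alpha+\beta$ is a genuine (smooth, positive definite) Finsler metric precisely when $\|\beta\|_\alpha<1$ at every point, and here $\|\beta\|_\alpha^2=a^{ij}b_ib_j=(1-|x|^2)^2\cdot\frac{|x|^2}{(1-|x|^2)^2}=|x|^2<1$ on $\mathbb{D}$; equivalently, $|\langle x,\xi\rangle|\le|x|\,|\xi|<|\xi|$ shows $\mathcal{F}_A(x,\xi)>0$ for $\xi\neq0$. This identifies $\mathcal{F}_A$ as a positive definite Randers metric and completes the argument.

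The one genuinely delicate point is the exchange of derivative and supremum in the first step; it is made clean by the degeneracy at $t=0$ (every boundary point is a maximiser there), so that Danskin's theorem applies with the maximising set being all of $\partial\mathbb{D}$ and no further regularity analysis is needed. Everything afterwards — the inversion image of a circle and the optimisation of a linear functional over it — is routine. An equally short alternative is to start from the explicit closed form of $a_{\mathbb{D}}(x,y)$ in \cite[Theorem 2]{APMT} and differentiate it along $y=x+t\xi$ at $t=0^+$.
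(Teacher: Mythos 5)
Your argument is correct, and it reaches the formula by a genuinely different route from the paper. The paper starts from the closed-form expression $\delta_A(z_1,z_2)=\log\bigl(\tfrac{|z_1-z_2|+|z_1\bar z_2-1|}{1-|z_2|^2}\bigr)$ imported from \cite[Theorem 2]{APMT}, substitutes $z_2=z+t\xi$, Taylor-expands to first order in $t$, and invokes the Busemann--Mayer limit; the identification of the supremum point $a^+$ on $\partial\mathbb{D}$ (the paper's Theorem \ref{PPN1}) is carried out separately and is not actually needed for the derivation of $\mathcal{F}_A$. You instead differentiate the supremum itself: since $g(z,0)=0$ for every boundary point, the maximising set at $t=0$ is all of $\partial\mathbb{D}$, Danskin's theorem legitimises exchanging $\sup$ and $\partial_t$, and the resulting functional $\sup_{z\in\partial\mathbb{D}}\langle z-x,\xi\rangle/|z-x|^2$ linearises under inversion and is maximised over the image circle of centre $x/(1-|x|^2)$ and radius $1/(1-|x|^2)$, giving $\alpha+\beta$ directly. (I checked the inversion data: for $|w-(-x)|=1$ one gets centre $-x/(|x|^2-1)=x/(1-|x|^2)$ and radius $1/(1-|x|^2)$, as you state.) Your route has two advantages: it does not presuppose the explicit distance formula on $\mathbb{D}$, and, as you note, it works verbatim for the unit ball in $\mathbb{R}^n$; the paper's route, by contrast, yields the exact finite-distance formula and the location of the extremal boundary point, which it uses elsewhere. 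The remaining structural verifications ($a_{ij}$, $b_i$, $\|\beta\|_\alpha^2=|x|^2<1$, $\beta=df$ with $f=-\tfrac12\log(1-|x|^2)$) coincide with the paper's. The only point worth flagging is that the interchange of limit and supremum is exactly the delicate step you identify, and your appeal to Danskin with the degenerate maximising set at $t=0$ handles it cleanly; the paper sidesteps this issue entirely by working with the closed-form distance.
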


 \begin{thm}\label{thm3.}\textnormal{The indicatrix $S_x$ of the Apollonian weak-Finsler structure  at the point $x$ in the unit disc $\mathbb D$ is an ellipse  with one of its foci at the point $x$ itself, origin as its center, major axis as the line joining origin to the point $x$, eccentricity $|x| $ and other focus at the point $-x$.}
\end{thm}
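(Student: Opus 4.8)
The approach is to write the curve $S_x$ down explicitly, recognise it as a conic, and put it in normal form. Displayed in $\mathbb R^2$ with its vertex at the point $x$, the indicatrix is $S_x=\{\,z\in\mathbb R^2:\mathcal F_A(x,z-x)=1\,\}$. Inserting the formula of Theorem \ref{thm01} and using $\langle x,z-x\rangle=\langle x,z\rangle-|x|^2$, the defining equation $|z-x|+\langle x,z-x\rangle=1-|x|^2$ reduces --- with no squaring yet --- to
\[
|z-x|\;=\;1-\langle x,z\rangle .
\]
I would then read the left-hand side as the Euclidean distance from $z$ to the point $x$ and the right-hand side as $|x|\cdot\operatorname{dist}(z,\ell)$, where $\ell=\{\,w:\langle x,w\rangle=1\,\}$: indeed $w\mapsto\langle x,w\rangle$ has gradient of norm $|x|$, so $\operatorname{dist}(z,\ell)=|1-\langle x,z\rangle|/|x|$, and the sign is automatically correct on the locus since the right-hand side equals $|z-x|\ge 0$. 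Hence $S_x$ is described by the focus--directrix equation $\operatorname{dist}(z,x)=|x|\cdot\operatorname{dist}(z,\ell)$, i.e.\ it is a conic with focus $x$, directrix $\ell$, and eccentricity $|x|$; positive definiteness of $\mathcal F_A$ gives $|x|<1$ (this is precisely the inequality $\|\beta\|_{\alpha}=|x|<1$ invoked in Theorem \ref{thm01}), so the conic is an ellipse.

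To pin down the centre, axes and foci I would now square, obtaining the centred equation (no linear term)
\[
|z|^2-\langle x,z\rangle^2\;=\;1-|x|^2 .
\]
The quadratic form $z\mapsto|z|^2-\langle x,z\rangle^2$ has eigenvalue $1$ on $x^{\perp}$ and eigenvalue $1-|x|^2$ on $\mathbb R x$, so the ellipse has semi-axis $\sqrt{1-|x|^2}$ in the directions orthogonal to $x$ and semi-axis $1$ along $x$; therefore its centre is the origin, its major axis is the line through $0$ and $x$, one has $a=1$, $b=\sqrt{1-|x|^2}$, $c=\sqrt{a^2-b^2}=|x|$, the two foci are $\pm c\,x/|x|=\pm x$, and the eccentricity is $c/a=|x|$, which is exactly the statement. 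It remains to note that squaring introduced no extraneous points: on this ellipse the component of $z$ along $x/|x|$ has modulus at most $1$, so $\langle x,z\rangle\le|x|<1$ and $1-\langle x,z\rangle>0$, hence every such $z$ genuinely satisfies $|z-x|=1-\langle x,z\rangle$.

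I do not anticipate a real obstacle: the only spots needing care are noticing the substitution $z=x+\xi$ --- which simultaneously shifts the indicatrix to its standard display position at $x$ and turns the Randers expression into the transparent form $|z-x|=1-\langle x,z\rangle$ --- and the routine check that the squaring step is reversible here. (For $x=0$ the ellipse degenerates to the Euclidean unit circle with eccentricity $0$, consistent with the statement; and the base point $x$ being one of the foci is the familiar hallmark of a Randers indicatrix.) A fully computational alternative is to rotate coordinates so that $x=(r,0)$ with $r=|x|$, in which case the equation becomes $z_1^2+z_2^2/(1-r^2)=1$ after squaring and completing the square, and the same data are read off at once.
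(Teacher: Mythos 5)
Your proof is correct, and its skeleton is the same as the paper's: the substitution $\eta=x+\xi$ (your $z$), the unsquared equation $|z-x|=1-\langle x,z\rangle$, and the squared equation $|z|^2-\langle x,z\rangle^2=1-|x|^2$, which is exactly the paper's displayed quadratic $\eta_1^2(1-x_1^2)+\eta_2^2(1-x_2^2)-2x_1x_2\eta_1\eta_2=1-|x|^2$. Where you diverge is in how the conic is identified: the paper computes the discriminant $B^2-4AC=-4(1-|x|^2)<0$ to conclude ``ellipse'' and then simply asserts the center, foci, major axis and eccentricity, whereas you extract the focus--directrix form $\operatorname{dist}(z,x)=|x|\cdot\operatorname{dist}(z,\ell)$ with $\ell=\{w:\langle x,w\rangle=1\}$ before squaring, which yields the focus $x$ and the eccentricity $|x|$ (and hence the ellipse property, from $|x|<1$) in one stroke, and then diagonalize the quadratic form along $\mathbb{R}x\oplus x^{\perp}$ to get $a=1$, $b=\sqrt{1-|x|^2}$, $c=|x|$ and the foci $\pm x$. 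Your version therefore actually proves the part of the statement that the paper leaves as ``it can be deduced,'' and your closing check that squaring introduces no extraneous points (since $\langle x,z\rangle\le|x|<1$ on the ellipse) is a detail the paper omits; the only cost is the mild degeneracy at $x=0$, which you correctly flag as the unit circle with eccentricity $0$.
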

\noindent
  The various curvatures, for instance, $S$-curvature, flag curvatures  of the Apollonian weak-Finsler structure in the unit disc have also been computed explicitly (see Theorems \ref{thm4.1} and \ref{thm4.2}). As a consequence we have the following results on the bound of its $S$-curvature and flag curvature:
  \begin{thm}\label{thm1.3}
    \textnormal{The $S$-curvature of the Apollonian weak-Finsler structure $\mathcal{F}_A$ in the unit disc $\mathbb{D}$ is bounded below by $\frac{3}{2}$, i.e., $S \geq \frac{3}{2}\mathcal{F}_A$.}
\end{thm}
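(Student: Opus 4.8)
The plan is to use the Randers representation $\mathcal F_A=\alpha+\beta$ from Theorem~\ref{thm01}, in which $\alpha$ is the Poincar\'e metric and $\beta=df$ is \emph{closed}, compute the $S$-curvature explicitly, and then estimate it. First I would record the relevant Riemannian data. Since $\alpha$ is conformal to the flat metric, $a_{ij}(x)=(1-|x|^2)^{-2}\delta_{ij}=e^{2\phi}\delta_{ij}$ with $\phi=-\log(1-|x|^2)$ and $\phi_i=2x_i/(1-|x|^2)$, so its Levi-Civita connection has $\bar\Gamma^k_{ij}=\delta^k_i\phi_j+\delta^k_j\phi_i-\delta_{ij}\phi_k$; the $1$-form $\beta$ has components $b_i=x_i/(1-|x|^2)$ and $\alpha$-norm $b^2:=a^{ij}b_ib_j=|x|^2$.

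Next I would compute the $\alpha$-covariant derivative $b_{i|j}=\partial_j b_i-\bar\Gamma^k_{ij}b_k$. A direct substitution yields the symmetric tensor
\[
r_{ij}:=b_{i|j}=\frac{(1+|x|^2)\,\delta_{ij}-2x_ix_j}{(1-|x|^2)^2},
\]
whose symmetry re-confirms that $\beta$ is closed (the skew part $s_{ij}=\tfrac12(b_{i|j}-b_{j|i})$ vanishes). From it I extract $r_{00}=r_{ij}\xi^i\xi^j=\big((1+|x|^2)|\xi|^2-2\langle x,\xi\rangle^2\big)/(1-|x|^2)^2$ and, with $b^i=a^{ij}b_j=(1-|x|^2)x_i$, the quantity $r_0:=b^i r_{ij}\xi^j=\langle x,\xi\rangle$.

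Then I would feed these into the $S$-curvature formula for a Randers metric with closed $\beta$. One may either quote the general Randers $S$-curvature formula of Chern--Shen (in which all the terms carrying the skew part drop out), or derive the needed special case in a couple of lines: the Busemann--Hausdorff distortion of $\alpha+\beta$ is $\tau=\tfrac{n+1}{2}\log\!\big(\mathcal F_A/(\alpha(1-b^2))\big)$, the geodesic spray of $\mathcal F_A$ differs from that of $\alpha$ by $G^i-\bar G^i=\tfrac{r_{00}}{2\mathcal F_A}\xi^i$ when $\beta$ is closed, and since $\mathcal F_A$ is a first integral of its own spray, $S=\dot\tau$ collapses to
\[
S=\frac{n+1}{2}\left(\frac{r_{00}}{\mathcal F_A}+\frac{2\,r_0}{1-b^2}\right).
\]
With $n=2$ and the values above, this simplifies, after clearing the common factor $|\xi|+\langle x,\xi\rangle$, to
\[
S=\frac{3}{2}\cdot\frac{|\xi|\big((1+|x|^2)|\xi|+2\langle x,\xi\rangle\big)}{(1-|x|^2)\big(|\xi|+\langle x,\xi\rangle\big)}.
\]

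Finally I would compare this with $\tfrac32\mathcal F_A=\tfrac32\,(|\xi|+\langle x,\xi\rangle)^2/\big((1-|x|^2)(|\xi|+\langle x,\xi\rangle)\big)$: forming $S-\tfrac32\mathcal F_A$ over the common denominator, the numerator collapses to $|x|^2|\xi|^2-\langle x,\xi\rangle^2\ge 0$ by the Cauchy--Schwarz inequality, while the denominator $(1-|x|^2)(|\xi|+\langle x,\xi\rangle)$ is strictly positive for $x\in\mathbb D$ and $\xi\ne 0$ because $\mathcal F_A$ is positive definite (Theorem~\ref{thm01}). Hence $S\ge\tfrac32\mathcal F_A$, with equality precisely when $\xi$ is parallel to $x$, so the constant $\tfrac32=(n+1)/2$ is sharp and is exactly where the dimension $n=2$ enters. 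I expect the only genuinely delicate point to be pinning down the $S$-curvature formula with the correct numerical coefficients --- equivalently, correctly identifying the Busemann--Hausdorff volume density and the spray deviation $G^i-\bar G^i$ in the closed-$\beta$ case (if the explicit $S$-curvature of $\mathcal F_A$ is already available from Theorem~\ref{thm4.1}, this step is simply a citation); once that is secured, the simplification and the final Cauchy--Schwarz estimate are routine.
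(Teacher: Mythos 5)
Your proposal is correct and follows essentially the same route as the paper: the explicit $S$-curvature you obtain, $S=\tfrac{3}{2}\,|\xi|\bigl((1+|x|^2)|\xi|+2\langle x,\xi\rangle\bigr)\big/\bigl((1-|x|^2)(|\xi|+\langle x,\xi\rangle)\bigr)$, coincides with the formula of Theorem \ref{thm4.1} (since $\mathcal F_A(1-|x|^2)^2=(1-|x|^2)(|\xi|+\langle x,\xi\rangle)$), your closed-form $S=\tfrac{n+1}{2}\bigl(\tfrac{r_{00}}{\mathcal F_A}+\tfrac{2r_0}{1-b^2}\bigr)$ is exactly the paper's $S=(n+1)\bigl[\tfrac{e_{00}}{2F}-(s_0+\rho_0)\bigr]$ specialized to $s_{ij}=0$, and the final step reducing $S-\tfrac32\mathcal F_A$ to $|x|^2|\xi|^2-\langle x,\xi\rangle^2\ge 0$ via Cauchy--Schwarz is precisely the paper's argument. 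The only addition on your side is the (correct) observation that equality holds exactly when $\xi$ is parallel to $x$.
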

  \begin{thm}\label{thm1.4}
  \textnormal{The flag curvature $K$ of the Apollonian weak-Finsler structure $\mathcal{F}_A$ in the unit disc satisfies $-\infty < K<-1$. In particular, the unit disc equipped with Apollonian weak-Finsler structure  is a Hadamard manifold.}
\end{thm}

\noindent The paper is organized as follows.
 In Section \ref{sec2}, we discuss the preliminaries required for the paper. In Section \ref{sec3}, we explicitly construct the Apollonian weak-Finsler structure on the unit disc and obtain the indicatrix of the Apollonian weak-Finsler structure at any point in the unit disc.  Further, in Section \ref{sec4},  we investigate the geometry of the Apollonian weak-Finsler structure on the disc $\mathbb{D}$ and compute explicitly the spray coefficients, $S$-curvature,  Riemann curvature, Ricci curvature and the flag curvature.  Finally, in Section \ref{sec5}, we discuss the geometric realization of the Apollonian weak-Finsler structure on unit disc as pull back of a Randers structure in $\mathbb R^3$  on the upper sheet of hyperboloid of two sheets. We close the section by finding the Zermelo Navigation data for the Apollonian weak-Finsler structure on the unit disc $\mathbb{D}$.\\
 
\noindent In the sequel, we denote by $|.|$ and $\langle . \rangle$,  the Euclidean norm and the Euclidean inner product, respectively and $\mathbb{D}=\left\lbrace (x^1,x^2) \in \mathbb{R}^2 :  (x^1)^2+(x^2)^2 < 1 \right\rbrace$, the Euclidean disc centered at the origin with radius $1$ in $\mathbb{R}^2$. 
\section{Preliminaries}\label{sec2}
The theory of Finsler manifolds can be considered as a generalization of Riemannian manifolds, where the Riemannian metric is replaced by a so-called {\it Finsler} structure. A Finsler structure is a smoothly varying family of Minkowski norms in each tangent space of the manifold.\\
Let $ M $ be an $n$-dimensional smooth manifold and let $T_{x}M$ denote the tangent space of $M$ at $x$. The tangent bundle $TM$  of $M$ is the disjoint union of tangent spaces: $TM:= \sqcup _{x \in M}T_xM $. We denote the elements of $TM$ by $(x,\xi)$, where $\xi \in T_{x}M $ and $TM_0:=TM \setminus\left\lbrace 0\right\rbrace $.
 \begin{defn}[{Finsler structure \cite[\boldmath$\S 1.2$]{SSZ};  
  \cite[\boldmath$\S 16.2$]{Shiohama-BT}  }]\label{def 3.A01} 
  \textnormal{A {\it Finsler structure} on a smooth manifold $M$ is a continuous function $F:TM \to [0,\infty)$ satisfying the following conditions:}
   \begin{enumerate}[(i)]
       \item \textnormal{ $F$ is smooth on $TM_{0}$},
      \item \textnormal{ $F$ is a positively 1-homogeneous on the fibers of the tangent bundle $TM$,\\ i.e, $F(x,\lambda \xi)=\lambda F(x,\xi) ; \lambda >0$  and $(x,\xi)\in TM $}
 \item \textnormal{ The Hessian of $\displaystyle \frac{F^2}{2}$ with elements $\displaystyle g_{ij}=\frac{1}{2}\frac{\partial ^2F^2}{\partial \xi^i \partial \xi^j}$ is positive definite on $TM_0$.}\\
\textnormal{ The pair $(M, F)$ is called a Finsler space, and the quantities $g_{ij}$ are called components of the fundamental tensor of the Finsler structure $F$}.
   \end{enumerate}
  \end{defn}
 
 \noindent
 It is easy to see that Riemannian metrics are examples of Finsler structures. 
 \begin{defn}[{Finsler length of a curve  \cite[\boldmath$\S 1.3$]{SSZ}}]
    	\textnormal{The Finsler structure $F$ on the manifold $M$, induces a length structure $L_F$  on piecewise smooth curves in $M$. Let $\gamma:\left[ 0, 1\right] \rightarrow M$ be a piecewise smooth curve. Then the Finsler length of the curve $\gamma$ is defined by}
	\begin{equation}
		L_F\left(\gamma\right)=\int\limits_{0}^{1}F\left(\gamma\left(t\right),\dot \gamma\left(t\right)\right)dt. 
	\end{equation}
\end{defn}
\begin{defn}[{Finsler distance function \cite[\boldmath$\S 1.3$]{SSZ}}]
\textnormal{ The distance  function $d_F$ on $M$ induced by the Finsler structure $F$ is defined as:}
		\begin{equation}
			d_F(p,q)=\underset{\gamma}{\inf}L_F(\gamma),
		\end{equation}
		\textnormal{where  $p,q\in M$, and the infimum is taken over all piecewise smooth curves $\gamma$ joining $p$ to $q$ i.e.,  $p=\gamma(0)$ and $q=\gamma(1)$}. 
\end{defn} 
\begin{defn}[{Indicatrix in Finsler manifold \cite[\boldmath$\S 2.3 $]{OHTA}}]
   \textnormal{The indcatrix at a point $x$ of a Finsler manifold   $(M,F)$   is defined as}
     \begin{equation}
         S_x=\{\xi\in T_xM : {F}(\xi)=1\},
     \end{equation}
   \textnormal{i.e.,} $S_x=T_xM\cap F^{-1}(1)$. 
 \end{defn}
\begin{defn}[{Randers structure \cite[\boldmath$\S 1.2$]{SSZ}}] \textnormal{The Randers structure is the simplest non-Riemannian example of a Finsler structure.} \textnormal{Let $\displaystyle \alpha=\sqrt{a_{ij}(x)dx^idx^j}$ be a Riemannian metric  and $\beta=b_i(x)dx^i$ be a $1$-form on a smooth manifold  $M$ with $|\beta|_{\alpha}<1$, where $\displaystyle |\beta|_{\alpha} = \sqrt {a^{ij}(x)b_{i}(x)b_{j}(x)}$; \  then $F(x,\xi)=\alpha(x,\xi)+\beta(x,\xi)$, for all $x \in M, \xi \in T_xM$, is called a Randers structure on $M$}. \\

\end{defn}
 \begin{defn} [\text{The Riemann curvature tensor \cite[\boldmath$\S 4.1$]{CXSZ}}]\label{def2}  
 \textnormal{The Riemann curvature tensor $R={{R}_\xi:T_xM \rightarrow T_xM}$, for a Finsler space  $(M^n,F)$ is defined by
 ${R}_{\xi}(u)={R}^{i}_{k}(x,\xi)u^{k} \frac{\partial}{\partial x^i}$,\ $u=u^k\frac{\partial}{\partial x^k}$,\ where ${R}^{i}_{k}={R}^{i}_{k}(x,\xi)$ denote the coefficients of the Riemann curvature tensor of the Finsler structure $F$ and are given by,}
 	\begin{equation}\label{eqn2.2.10}
 			{R}^{i}_{k}=2\frac{\partial{G}^i}{\partial	x^k}-\xi^j\frac{\partial^2{G}^i}{\partial x^j\partial	\xi^k}+2{G}^j\frac{\partial^2 {G}^i}{\partial \xi^j\partial	\xi^k}-\frac{\partial{G}^i}{\partial \xi^j}\frac{\partial {G}^j}{\partial \xi^k}.
 	\end{equation}
 \textnormal{Here, $G^i=G^i(x,\xi)$ are local functions on $TM$, called the spray coefficients and defined by }
\begin{equation}\label{eqn2.1.14}
{G}^i=\frac{1}{4}{g}^{i{\ell}}\left\{\left[{F}^{2}\right]_{x^k\xi^{\ell}}\xi^k-	\left[{F}^{2}\right]_{x^{\ell}}\right\}.
\end{equation} 
 \end{defn}
 \noindent\textnormal{	The flag curvature $K=K(x,\xi, P)$, generalizes the sectional curvature in Riemannian geometry to the Finsler geometry and does not depend on whether one is using the Berwald, the Chern or the Cartan connection on the Finsler manifold.}

 	\begin{defn}[\text{Flag curvature \cite[\boldmath$\S 4.1$]{CXSZ}}] 
  \textnormal{	For a  plane $P\subset T_xM$ containing a non-zero vector $\xi$ called \textit{pole}, the \textit{flag curvature} $\textbf{K}(x,\xi,P)$ is defined by
 	\begin{equation}\label{eqn2.1.11}
 	\textbf{K}(x,\xi,P) :=\frac{g_\xi(R_\xi(u), u)}{g_\xi(\xi, \xi)g_\xi(u, u)- g_\xi(\xi, u)^2},
 	\end{equation}
 	where $u\in P$ is such that $P=\text{span} \left\{ \xi,u\right\} $.\\
    }
 		\end{defn}
\noindent The relation between the Riemann curvature  $R^i_j$ and the scalar flag curvature $\textbf{K}(x,\xi)$ of a Finsler structure $F$ is given by (see for more detail \cite[$\S 4.1$]{CXSZ})
 		 \begin{equation}\label{eqn2.1.12}
 		 R^i_j= \textbf{K}(x,\xi)\left\lbrace F^2 \delta^i_j-FF_{\xi^j}\xi^i\right\rbrace.
 		 \end{equation}
     
 \noindent It is well known that there is no canonical volume form on a Finsler manifold, like in the Riemannian case. Indeed, there are several well-known volume forms on the Finsler manifold, for instance,  the {\it Busemann-Hausdorff} volume form, the {\it Holmes-Thompson} volume form,  the {\it maximum } volume form,  the {\it minimum } volume form, etc. Here we discuss only the Busemann-Hausdorff volume form.
 \begin{defn}[{Busemann-Hausdorff Volume form of a Finsler manifold \cite[\boldmath$\S 2.2$]{SZ}; \cite[\boldmath$\S 2.7$]{BT}}]
\textnormal{ Let $(M,F)$ be an $n$-dimensional Finsler manifold and $(U,x^i)$ be a coordinate chart containing the point $x$. Let $\{\frac{\partial}{\partial x^i}\big |_x\}_{i=1}^n$ be the basis of $T_xM$ induced from the coordinate chart $(U,x^i)$. Then the \textit{Busemann-Hausdorff} volume form on the Finsler manifold $(M,F)$ is defined as:
  $dV_{BH}=\sigma_{BH}(x) \ dx$, where}
 \begin{equation}
 \sigma_{BH}(x)=\frac{\emph{Vol} ( B^n(1))}{\textnormal{Vol} \left\lbrace (\xi^i)\in \mathbb R^n: F(x,\xi^i\frac{\partial}{\partial x^i}\big |_x)<1\right\rbrace},
 \end{equation}
 \textnormal{and $dx=dx^1\wedge dx^2\wedge \dots \wedge dx^n$.
 Here $ B^n(1)$ denotes the Euclidean unit ball and \textnormal{Vol} denotes the canonical volume.}
 \end{defn}
\noindent
\textnormal{ The Busemann-Hausdorff volume form of the Randers structure can be explicitly given as follows:}
 \begin{lem}[{\cite[\boldmath$\S 1.3$]{SSZ}}]\label{lem 3.A1}
 \textnormal{The Busemann-Hausdorff volume form of the Randers structure $F =\alpha + \beta$ is given by,}
 \begin{equation}\label{eqn 3.A38}
 dV_{BH} = \left( 1-||\beta||^2_\alpha\right)^{\frac{n+1}{2}} dV_\alpha,
 \end{equation}
 \textnormal{where $dV_\alpha=\sqrt{\det (a_{ij})} \ dx$.}
 \end{lem}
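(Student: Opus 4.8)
\noindent The plan is to compute the Busemann--Hausdorff density $\sigma_{BH}(x)$ directly from its definition, which at each fixed point $x$ reduces to finding the Euclidean (Lebesgue) volume of the solid indicatrix $I_x:=\{\xi\in\mathbb R^n : \alpha(x,\xi)+\beta(x,\xi)<1\}$, where $\alpha(x,\xi)=\sqrt{a_{ij}(x)\xi^i\xi^j}$ and $\beta(x,\xi)=b_i(x)\xi^i$. A preliminary observation is that $I_x$ is a bounded neighbourhood of the origin in $\mathbb R^n$: by Cauchy--Schwarz, $|\beta(x,\xi)|\le|\beta|_\alpha\,\alpha(x,\xi)$, hence $(1-|\beta|_\alpha)\alpha\le F\le(1+|\beta|_\alpha)\alpha$, so $I_x$ is trapped between two $\alpha$-balls; since $|\beta|_\alpha<1$ this makes $\mathrm{Vol}(I_x)$ finite and positive, so $\sigma_{BH}(x)$ is well defined.

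Next I would normalise $\alpha$ by a linear change of variable in the fibre. Pick $A\in GL(n,\mathbb R)$ with $A^{\mathsf T}(a_{ij}(x))A=I_n$; then under $\xi=A\eta$ the norm $\alpha(x,\xi)$ becomes the standard Euclidean norm $|\eta|$, while $\beta(x,\xi)$ becomes the linear functional $\langle\tilde b,\eta\rangle$ with $\tilde b=A^{\mathsf T}b$, and since $AA^{\mathsf T}=(a^{ij}(x))$ one gets $|\tilde b|^2=a^{ij}(x)b_ib_j=|\beta|_\alpha^2=:b^2<1$. As $|\det A|=(\det(a_{ij}))^{-1/2}$, this change of variable gives $\mathrm{Vol}(I_x)=(\det(a_{ij}))^{-1/2}\,V_b$, where $V_b:=\mathrm{Vol}\{\eta\in\mathbb R^n:|\eta|+\langle\tilde b,\eta\rangle<1\}$.

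The heart of the computation is the evaluation of $V_b$. Writing $\eta=t\,\widehat b+\eta_\perp$ with $\widehat b=\tilde b/b$ and $\eta_\perp\perp\widehat b$, so $|\eta|^2=t^2+|\eta_\perp|^2$ and $\langle\tilde b,\eta\rangle=bt$, the defining condition reads $\sqrt{t^2+|\eta_\perp|^2}<1-bt$. The left side being nonnegative forces $1-bt>0$, and on that half-space the condition is equivalent to its square, $(1-b^2)t^2+2bt+|\eta_\perp|^2<1$. Completing the square in $t$ and then rescaling by $u_1=(1-b^2)(t+\tfrac{b}{1-b^2})$, $u_\perp=\sqrt{1-b^2}\,\eta_\perp$ turns this region into the open unit ball $\{|u|<1\}$; one must check it is legitimate to drop the constraint $1-bt>0$, which holds because on this ellipsoid $t=\tfrac{u_1-b}{1-b^2}\le\tfrac{1}{1+b}<\tfrac1b$, and this is precisely where the hypothesis $b<1$ enters. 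The rescaling Jacobian is $(1-b^2)^{-1}\cdot(1-b^2)^{-(n-1)/2}=(1-b^2)^{-(n+1)/2}$, so $V_b=(1-b^2)^{-(n+1)/2}\,\mathrm{Vol}(B^n(1))$.

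Finally, combining the two Jacobian contributions yields $\mathrm{Vol}(I_x)=(\det(a_{ij}))^{-1/2}(1-b^2)^{-(n+1)/2}\,\mathrm{Vol}(B^n(1))$, whence $\sigma_{BH}(x)=\mathrm{Vol}(B^n(1))/\mathrm{Vol}(I_x)=(1-|\beta|_\alpha^2)^{(n+1)/2}\sqrt{\det(a_{ij})}$, that is, $dV_{BH}=(1-|\beta|_\alpha^2)^{(n+1)/2}\,dV_\alpha$, as claimed. I expect the only genuinely delicate point to be the justification that, after squaring, the ellipsoid one obtains lies entirely in the half-space $\{1-\langle\tilde b,\eta\rangle>0\}$ (so that the squaring is reversible and the region really coincides with that ellipsoid); this is exactly where $|\beta|_\alpha<1$ is indispensable. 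The rest is careful bookkeeping of the normalisation Jacobian $(\det a_{ij})^{-1/2}$ and the rescaling Jacobian $(1-b^2)^{-(n+1)/2}$, whose exponents add up to the required $\tfrac{n+1}{2}$.
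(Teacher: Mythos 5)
Your proof is correct: the reduction to the Euclidean case via the linear normalisation of $\alpha$, the explicit identification of $\{|\eta|+\langle\tilde b,\eta\rangle<1\}$ as an ellipsoid (with the check that the half-space constraint $1-bt>0$ is automatic on it), and the bookkeeping of the two Jacobians all hold up, and the exponents combine to $\tfrac{n+1}{2}$ as required. The paper itself does not prove this lemma but cites it from Chern--Shen \cite[\S 1.3]{SSZ}, and your computation is essentially the standard derivation given there, so there is nothing to add.
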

 \noindent For the \textit{Busemann-Hausdorff} volume form 
  $dV_{BH}=\sigma_{BH}(x)dx$ on Finsler manifold $(M,F)$, the \textit{distortion} $\tau$ is defined by (see, \cite[$\S 5.1$]{SSZ})
\begin{equation*}
\tau (x,\xi) :=\log \frac{\sqrt{\det(g_{ij}(x,\xi))}}{\sigma _{BH}(x)}.
\end{equation*}
Now we define $S$-curvature of the Finsler manifold $(M, F)$ with respect to the volume form $dV_{BH}$.

\begin{defn}[{S-curvature, \cite[\boldmath$\S 5.1$]{SSZ}}]
 \textnormal{	For a vector $\xi\in T_xM\backslash \left\lbrace 0\right\rbrace$, let $\gamma=\gamma(t)$ be the geodesic with $\gamma(0)=x$ and $\dot{\gamma}(0)=\xi$.
 Then the $S$-curvature of the Finsler structure $F$ is defined by 	
\begin{equation*}
 S(x, \xi)=\frac{d}{dt}\left[ \tau \left(\gamma(t), \dot{\gamma}(t)\right) \right]_{|_{t=0}}.
 \end{equation*}}
 \end{defn}
\noindent The $S$-curvature of $F$ in  terms of spray coefficients $G^m$ is given by
 \begin{equation}\label{eqn2.1.15}
 S(x,\xi)=\frac{\partial G^m}{\partial \xi^m}-\xi^m\frac{\partial\left( \ln\sigma_{BH}\right) }{\partial x^m},
 \end{equation} 
 where $G^m$ are given by  \eqref{eqn2.1.14}.
         
    
    
    
     \begin{defn}[{Projectively flat space, \cite[\boldmath$\S 3.4$]{SSZ}}]\label{PF}
    \textnormal{ A Finsler structure $F = F(x,\xi)$ on an open subset $\mathcal{U}\in \mathbb{R}^n$ is said to be \textit{projectively flat} if all geodesics are straight in $\mathcal{U}$, that is, $\sigma(t)=f(t) a+ b$  for some constant vectors $a,b \in \mathbb{R}^n$ ($a\neq 0$) .  A Finsler structure $F$
     on a manifold $M$ is said to be locally projectively flat if at any point, there is a local coordinate system $(x^i)$ in which $F$ is projectively flat.}
 \end{defn}

 \begin {proposition} \cite[Proposition $3.4.8$]{SSZ}\label{ppn2.3}
\textnormal{A Randers structure $F=\alpha +\beta$ is locally projectively
 flat if and only if $\alpha$ is locally projectively flat and $\beta$ is closed.}
\end{proposition}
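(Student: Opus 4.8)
The plan is to translate both occurrences of ``(locally) projectively flat'' into Hamel's differential criterion and then, in the fibre variable $\xi$, to separate the terms that are rational in $\xi$ from the one carrying the irrationality of $\alpha$. First I record the criterion, which follows from \eqref{eqn2.1.14}: using $[F^2]_{x^\ell}=2FF_{x^\ell}$, Euler's relation $g_{ij}\xi^j=FF_{\xi^i}$, and the $1$-homogeneity of $F_{x^k}$ in $\xi$, a short manipulation of \eqref{eqn2.1.14} gives
\[
G^i=\frac{F_{x^k}\xi^k}{2F}\,\xi^i+\frac{F}{2}\,g^{i\ell}\bigl(F_{x^k\xi^\ell}\xi^k-F_{x^\ell}\bigr).
\]
The first term is already proportional to $\xi^i$; and if the bracket equals $c\,F_{\xi^\ell}$ for some scalar $c$, contracting with $\xi^\ell$ and using homogeneity forces $c=0$. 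Hence $G^i$ is proportional to $\xi^i$ --- equivalently, by Definition \ref{PF}, all geodesics of $F$ are straight --- if and only if $F$ satisfies the Hamel equation $F_{x^k\xi^\ell}\xi^k=F_{x^\ell}$. I will also use that a $1$-form $\beta=b_i\,dx^i$ is closed precisely when $\partial b_i/\partial x^j=\partial b_j/\partial x^i$.

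Substituting $F=\alpha+\beta$ and using $\beta_{\xi^\ell}=b_\ell$, $\beta_{x^k\xi^\ell}=\partial b_\ell/\partial x^k$, the Hamel equation for $F$ becomes
\[
H_\ell(\alpha)+\Bigl(\tfrac{\partial b_\ell}{\partial x^k}-\tfrac{\partial b_k}{\partial x^\ell}\Bigr)\xi^k=0,\qquad H_\ell(\alpha):=\alpha_{x^k\xi^\ell}\xi^k-\alpha_{x^\ell},
\]
and $H_\ell(\alpha)=0$ for all $\ell$ is exactly the Hamel equation for the Riemannian metric $\alpha$, i.e.\ the condition that $\alpha$ be projectively flat in those coordinates. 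The crucial structural point is that $H_\ell(\alpha)$ becomes polynomial in $\xi$ after multiplication by $\alpha^3$: differentiating $\alpha=(\alpha^2)^{1/2}$ and using $\alpha^2=a_{ij}(x)\xi^i\xi^j$, one checks that $\alpha^3 H_\ell(\alpha)\in\mathbb R[\xi]$ (an explicit polynomial in $\xi$ of degree at most $4$, with coefficients built from $a_{ij}$ and $\partial a_{ij}/\partial x^k$).

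Granting this, the ``if'' direction is immediate: in a chart where $\alpha$ is projectively flat we have $H_\ell(\alpha)=0$, and if $\beta$ is closed the second term above drops out, so the Hamel equation for $F$ holds and $F$ is locally projectively flat. For the ``only if'' direction, suppose $F$ is projectively flat in some chart. Then $H_\ell(\alpha)=-Q_\ell(\xi)$, where $Q_\ell(\xi):=\bigl(\partial b_\ell/\partial x^k-\partial b_k/\partial x^\ell\bigr)\xi^k$ is a polynomial of degree one, so $\alpha^3 Q_\ell=-\alpha^3 H_\ell(\alpha)\in\mathbb R[\xi]$. If $Q_\ell\not\equiv0$ for some $\ell$, this forces $\alpha^3$, hence $\alpha=\alpha^3/\alpha^2$, to be a rational function of $\xi$, so that the positive-definite quadratic form $\alpha^2=a_{ij}\xi^i\xi^j$ would be a square in $\mathbb R(\xi)$; but for $n\ge2$ such a form vanishes nowhere on $\mathbb R^n\setminus\{0\}$, hence has no real linear factor, hence (being of degree two) is irreducible and square-free --- a contradiction. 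Therefore every $Q_\ell$ vanishes identically, i.e.\ $\beta$ is closed, and then $H_\ell(\alpha)=0$ for all $\ell$, i.e.\ $\alpha$ is projectively flat in that chart. (For $n=1$ the statement is vacuous; in this paper the ambient manifold is $\mathbb{D}\subset\mathbb R^2$.)

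The only genuinely technical step is the middle one: carrying out the (routine but slightly lengthy) differentiation that exhibits $\alpha^3 H_\ell(\alpha)$ as a polynomial in $\xi$, and then invoking the irreducibility of a positive-definite quadratic form to conclude $\alpha\notin\mathbb R(\xi)$; together these let one ``read off'' that the antisymmetric part of $\partial b$ must vanish. An essentially equivalent approach bypasses Hamel's criterion and uses instead the standard decomposition of the spray of a Randers metric, $G^i={}^{\alpha}G^i+P\,\xi^i+\alpha\,s^i{}_0$, where $s_{ij}$ is the antisymmetric part of the $\alpha$-covariant derivative of $b_i$, $s^i{}_0=a^{ij}s_{jk}\xi^k$, and $P$ is positively homogeneous of degree one in $\xi$: projective flatness of $F$ forces the irrational term $\alpha\,s^i{}_0$ to vanish, hence $s_{ij}=0$, i.e.\ $\beta$ closed, after which $G^i-{}^{\alpha}G^i\parallel\xi^i$ yields projective flatness of $\alpha$; and the ``if'' direction can be read directly off the same formula.
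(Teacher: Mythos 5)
Your argument is correct. Note, however, that the paper offers no proof of this proposition at all: it is quoted verbatim from \cite[Proposition 3.4.8]{SSZ}, so there is nothing internal to compare against. What you have written is essentially the standard proof from that source: reduce projective flatness to the Hamel equation $F_{x^k\xi^\ell}\xi^k=F_{x^\ell}$ (your derivation of this from \eqref{eqn2.1.14}, including the contraction argument showing the proportionality constant must vanish, is sound), split the equation into the $\alpha$-part and the $d\beta$-part, and then use the fact that $\alpha^3 H_\ell(\alpha)$ is polynomial in $\xi$ while a positive-definite quadratic form is not a square in $\mathbb{R}(\xi)$ for $n\ge 2$ to force both pieces to vanish separately. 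The one place worth being explicit is the rationality step: if $\alpha^2=(p/q)^2$ in lowest terms then $q$ is a unit, so $\alpha^2$ would be the square of a linear form, contradicting positive definiteness --- you gesture at this but it deserves a line. Your closing alternative, via the decomposition $G^i=\bar G^i+P\xi^i+\alpha\, s^i{}_0$ with the irrational term $\alpha\,s^i{}_0$ forced to vanish, is exactly the machinery the paper itself deploys later in \eqref{eqnA5}--\eqref{eqnA8}, so it would integrate most naturally with the rest of Section \ref{sb3}; either route is complete as you have presented it.
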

\begin{thm}\label{thm1}(\cite{DSSZ}, $\S 11.3$, \cite{SSZ}, $\S 3.4.8$)
 \textnormal{ If  $F=\alpha+\beta$  is a Randers structure on a manifold $M$ with $\beta$ a closed $1$-form, then the Finslerian geodesics have the same trajectories as the geodesics of the underlying Riemannian metric $\alpha$. Moreover, if  $(M, \alpha)$ has constant curvature, then $(M, F)$ is locally projectively flat and consequently, in this case 
 $(M, F)$  is  projectively equivalent to  $(M, \alpha)$.} 
 \end{thm}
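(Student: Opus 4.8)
The plan is to compare the geodesic spray of the Randers metric $F=\alpha+\beta$ with that of the underlying Riemannian metric $\alpha$, and to read off the projective statements from that comparison. Write $\beta=b_i(x)\,dx^i$ and let $b_{i|j}$ denote the components of the covariant derivative of $\beta$ with respect to the Levi--Civita connection of $\alpha$. First I would split $b_{i|j}=r_{ij}+s_{ij}$ into its symmetric part $r_{ij}=\tfrac12(b_{i|j}+b_{j|i})$ and its antisymmetric part $s_{ij}=\tfrac12(b_{i|j}-b_{j|i})$, and set $s^i{}_j=a^{ik}s_{kj}$, $s^i{}_0=s^i{}_j\xi^j$. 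Starting from the defining formula \eqref{eqn2.1.14} for the spray coefficients and using the known expression for the inverse fundamental tensor of a Randers metric, one obtains the classical decomposition (see \cite{SSZ}, $\S 3.4$, and \cite{DSSZ}, $\S 11.3$)
\begin{equation*}
G^i=G^i_\alpha+P\,\xi^i+Q^i,
\end{equation*}
where $G^i_\alpha$ are the spray coefficients of $\alpha$, the scalar $P=P(x,\xi)$ is positively $1$-homogeneous in $\xi$, and the deviation term $Q^i$ is built from the antisymmetric tensor $s_{ij}$ alone (it is a multiple of $\alpha\,s^i{}_0$).

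Next I would invoke the elementary identity $b_{i|j}-b_{j|i}=\partial_j b_i-\partial_i b_j$ (valid because the Christoffel symbols of $\alpha$ are symmetric), which shows that $\beta$ being closed, $d\beta=0$, is \emph{equivalent} to $s_{ij}\equiv 0$. Consequently $Q^i\equiv 0$ and $G^i=G^i_\alpha+P\,\xi^i$. Two sprays that differ only by a term $P\,\xi^i$ with $P$ positively $1$-homogeneous are projectively related: reparametrizing an $\alpha$-geodesic $c(t)$ by a suitable function $s\mapsto t(s)$ (chosen so that the $P$-term is absorbed) produces an $F$-geodesic with the same image, and conversely. This gives the first assertion: the $F$-geodesics and the $\alpha$-geodesics have identical trajectories, differing only in their parametrizations ($F$-arc length versus $\alpha$-arc length).

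For the second assertion, suppose $(M,\alpha)$ has constant sectional curvature. By Beltrami's theorem a Riemannian metric of constant curvature is locally projectively flat, so $\alpha$ is locally projectively flat; since $\beta$ is closed, Proposition \ref{ppn2.3} then shows that $F=\alpha+\beta$ is locally projectively flat. Projective equivalence of $(M,F)$ and $(M,\alpha)$ is precisely the conclusion of the first part --- the same unparametrized geodesics --- and in the local coordinates furnished by Beltrami's theorem this common family of geodesics consists of straight line segments, which is the asserted statement.

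The main obstacle is the bookkeeping in the spray comparison: deriving the decomposition $G^i=G^i_\alpha+P\,\xi^i+Q^i$ and, in particular, verifying that $Q^i$ depends on $\beta$ only through $s_{ij}$. This is a standard but somewhat lengthy computation with \eqref{eqn2.1.14} and the Randers inverse metric, which I would either carry out once or quote from \cite{SSZ} and \cite{DSSZ}. As a sanity check on the first assertion, note that a closed $1$-form is locally exact, $\beta=df$, so that $L_F(\gamma)=L_\alpha(\gamma)+f(\gamma(1))-f(\gamma(0))$ for every piecewise smooth curve $\gamma$; the additive endpoint constant does not affect which curves are critical for the length functional among curves with fixed endpoints, so the $F$-geodesics and the $\alpha$-geodesics coincide as point sets.
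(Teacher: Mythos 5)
Your argument is correct. Note that the paper does not prove Theorem \ref{thm1} at all: it is quoted from the literature (Bao--Chern--Shen and Chern--Shen) and used as a black box. Your reconstruction is the standard proof and is consistent with the machinery the paper itself deploys later: the decomposition $G^i=\bar{G}^i+P\xi^i+Q^i$ with $Q^i=\alpha s^i_0$ is exactly \eqref{eqnA5}--\eqref{eqnA6} in the proof of Theorem \ref{thm4.1}, the identification of ``$\beta$ closed'' with $s_{ij}\equiv 0$ is the same computation as \eqref{eqnA2}--\eqref{eqnA4} in the special case treated there, and the second assertion is obtained exactly as you say by combining Beltrami's theorem with Proposition \ref{ppn2.3}. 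Your closing remark that a locally exact $\beta=df$ changes the length functional only by an endpoint term is a genuinely more elementary route to the first assertion (it avoids the spray bookkeeping entirely, at the cost of being a local statement and of requiring the usual care that critical curves of length are only determined up to reparametrization); either version is acceptable.
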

\begin{defn}[{Weak metric, semi metric and metric \cite[\boldmath$\S 1$]{APMT}}]\label{AA}
\textnormal{ A weak metric on a set $X$ is a function $\delta: X\times X \rightarrow [0,\infty)$ satisfying}
 \begin{itemize}
     \item[(i)] $\delta(x, x) = 0$ \textnormal{ for all $x$ in $X$};
    \item[(ii)] $\delta(x, y) +\delta(y, z) \geq \delta(x, z)$ \textnormal{ for all $x, y$ and $z$ in $X$}.
    \end{itemize} 
    \textnormal{  A \textit{semi-metric} is a symmetric weak metric, that is, a weak metric satisfying}
    \begin{itemize}  
    \item[(iii)] $\delta(x, y) = \delta(y,x) $\textnormal{ \textnormal{for all $x$ and $y$ in $X$.}}
 \end{itemize}
  \textnormal{A metric is a symmetric weak metric satisfying $\delta(x,y)=0$ iff} $x=y$.
 \end{defn}
 \begin{defn}[{The Apollonian weak metric, \cite[\boldmath$\S 4$]{APMT}}]\label{AAA}
\textnormal{For any open subset $A\subset \mathbb{R}^n$ which is either bounded or whose boundary $\partial A$ is unbounded, the  Apollonian weak metric $\delta_A: A \times A \rightarrow \mathbb{R}$ is defined by}:\\
For $x,y \in A$
 \begin{equation}
     \displaystyle \delta_A(x,y)=\sup_{a\in \partial{A}}\log\left|\frac{x-a}{y-a}\right|,
 \end{equation}
 \textnormal{where $\partial{A}$ denotes boundary of the set $A$.}
 \end{defn}
\noindent  In the sequel, we restrict ourselves to the discussion on dimension $2$ and for computational simplicity, we equipped $\mathbb{R}^2$ with the complex structure. Therefore we write $\mathbb{R}^2$ and $\mathbb{C}$ interchangebly as per context. For instance the unit disc $\mathbb{D}$ centered at origin in $\mathbb{C}$ is given by 
\begin{equation*}
\mathbb{D}=\{z\in \mathbb{C}: |z|<1\}.
 \end{equation*}
\section{The Finsler structure of the Apollonian weak metric on the unit disc $\mathbb{D}$}\label{sec3}
In this section we show that the Apollonian weak metric on unit disc $\mathbb{D} $ is a Finsler structure and explicitly find the Apollonian weak-Finsler structure on unit disc $\mathbb{D}$. Further, we show that the indicatrix of the Apollonian weak-Finsler structure  at any point of disc $\mathbb{D}$ is an ellipse. 
We begin by considering the map $M: \mathbb{D}\times \mathbb{D} \rightarrow \mathbb{R}$ defined by
\begin{equation}\label{t1}
\displaystyle M(z_1,z_2)=\sup_{z \in \partial \mathbb{D}}\left|\frac{z_1-z}{z_2-z}\right|,
\end{equation}
where $z_1, z_2 \in \mathbb{D}$, and the Apollonian weak metric is defined by 
\begin{equation}
    \delta_A(z_1,z_2)=\log M(z_1,z_2).
\end{equation}
First, we  find the expression of the point $\zeta \in \partial \mathbb{D}$, where the supremum in \eqref{t1} is attained. Let us consider $\zeta =e^{it}$ and set
\begin{equation}\label{t2}
    f(t)=\left|\frac{z_1-e^{it}}{z_2-e^{it}}\right|^2.
\end{equation}
\begin{lem} \textnormal{ Let $z_1$ and $z_2$ be two distinct points lies in the open unit disc $\mathbb{D}$, then there is a clircle (a straight line or a circle) passing through $z_1$, $z_2 $ and their inverse points $\frac{z_1}{|z_1|^2},\frac{z_2}{|z_2|^2}$ with respect to the unit circle $|z|=1$, given by}
\begin{equation}\label{A11}
    (|z|^2+1)\rho_1-(\bar{\rho}z+\rho\bar{z})\rho_2=0.
\end{equation}
\textnormal{where $\rho_1=\bar{z_1}{z_2}-\bar{z_2}{z_1}$ and $\rho_2=z_2(1-z_1\bar{z_2})-z_1(1-\bar{z_1}z_2)$}. \textnormal{Moreover, the above clircle intersects the unit circle $|z|=1$ orthogonally.}
\textnormal{Further, if $\rho_1 \ne 0$ then \eqref{A11} represents a circle centered at $\rho$, given by}
\begin{equation}\label{t3}
    \rho=\frac{\rho_2}{\rho_1}=\frac{z_2(1-z_1\bar{z_2})-z_1(1-\bar{z_1}z_2)}{\bar{z_1}{z_2}-\bar{z_2}{z_1}},
    \end{equation}
\textnormal{and radius $R$  given by}
    \begin{equation}\label{t5}
         R=(|\rho|^2-1)^{1/2}.
     \end{equation}
\end{lem}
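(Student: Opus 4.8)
The plan is to recognise the clircle \eqref{A11} as the (essentially unique) clircle through $z_1$ and $z_2$ that is orthogonal to the boundary circle $|z|=1$, and then to extract its centre and radius by completing the square.

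The core observation is that the equation \eqref{A11} is invariant under the inversion $\iota\colon z\mapsto 1/\bar z$ in the unit circle: substituting $z\mapsto 1/\bar z$ and multiplying through by $|z|^2$ returns \eqref{A11} verbatim, since $|1/\bar z|^2=1/|z|^2$ and $\bar\rho(1/\bar z)+\rho(1/z)=(\bar\rho z+\rho\bar z)/|z|^2$. Next I would check, by direct substitution using $\rho_1=\bar z_1z_2-\bar z_2z_1$, $\rho_2=z_2(1-z_1\bar z_2)-z_1(1-\bar z_1z_2)$ and $\rho_2=\rho\rho_1$, that $z=z_1$ and $z=z_2$ satisfy \eqref{A11}; in fact this is how $\rho_1,\rho_2$ are produced in the first place, by writing the most general clircle orthogonal to $|z|=1$ as $\lambda(|z|^2+1)-(\bar cz+c\bar z)=0$ (the coefficient of $|z|^2$ equalling the constant term is exactly the orthogonality condition, and $\lambda=0$ gives the lines through the origin) and solving the $2\times 3$ real homogeneous system expressing passage through $z_1$ and $z_2$, which gives $\lambda$ proportional to $\operatorname{Im}(\bar z_1z_2)$ and $c$ proportional to $(1+|z_1|^2)z_2-(1+|z_2|^2)z_1=\rho_2$. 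Since $\iota(z_j)=z_j/|z_j|^2$, the inverse points $z_1/|z_1|^2$ and $z_2/|z_2|^2$ then satisfy \eqref{A11} as well, so all four points lie on the clircle \eqref{A11}; and being $\iota$-invariant and distinct from the unit circle (it contains the interior point $z_1$), this clircle is orthogonal to $|z|=1$ (the standard fact that a circle invariant under reflection in another circle is either that circle or orthogonal to it).

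It then remains to classify \eqref{A11} and read off the data. If $\rho_1\ne 0$, dividing \eqref{A11} by $\rho_1$ and completing the square rewrites it as $|z-\rho|^2=|\rho|^2-1$ with $\rho=\rho_2/\rho_1$, which is \eqref{t3}, identifies the radius $R^2=|\rho|^2-1$ as in \eqref{t5}, and makes the orthogonality $|\rho|^2=R^2+1$ manifest (two circles of radii $R$ and $1$ whose centres are at distance $|\rho|$ meet orthogonally iff $|\rho|^2=R^2+1$). The inequality $R^2>0$, needed for \eqref{A11} to be a genuine circle, follows from $z_1\in\mathbb{D}$: evaluating the squared-modulus form at $z=z_1$ gives $|\rho|^2-1=|z_1-\rho|^2$, which is $>0$ since $|z_1-\rho|=0$ would force $|z_1|=|\rho|=1$. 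When $\rho_1=0$, the points $0,z_1,z_2$ are collinear and \eqref{A11} degenerates into the line through them, a diameter of $\mathbb{D}$, which is again orthogonal to $|z|=1$.

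The only real work is the algebra behind the two membership checks: confirming that passage through $z_1$ and through $z_2$ forces the coefficients to be exactly $\rho_1$ and $\rho_2$ up to one common (possibly imaginary) scalar, which is delicate mainly because $\rho_1$ is purely imaginary while $\rho_2$ is not. Once \eqref{A11} is in hand, the inversion-invariance, the concyclicity of the four points, the orthogonality with $|z|=1$, and the positivity of $R^2$ are each a one-line consequence, so I expect that computation to be essentially the only obstacle.
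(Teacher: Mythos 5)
Your argument is correct and complete in outline, and since the paper states this lemma without any proof at all, there is nothing to compare it against; the route you take --- writing the general clircle orthogonal to $|z|=1$ as $\lambda(|z|^2+1)-(\bar c z+c\bar z)=0$, solving the two incidence conditions to get $\lambda\propto\rho_1$, $c\propto\rho_2$, and then getting the inverse points, the orthogonality, and the centre/radius in one stroke from the invariance under $z\mapsto 1/\bar z$ --- is a clean and economical way to establish every claim, including the degenerate case $\rho_1=0$ and the positivity of $R^2$, which the paper does not address. One substantive caveat: the ``direct substitution'' you defer to would in fact fail on equation \eqref{A11} \emph{as printed}. Plugging $z=z_1$ into $(|z|^2+1)\rho_1-(\bar\rho z+\rho\bar z)\rho_2=0$ and using $\bar\rho z_1+\rho\bar z_1=1+|z_1|^2$ leaves $(1+|z_1|^2)(\rho_1-\rho_2)$, which is not zero in general (e.g.\ $z_1=\tfrac12$, $z_2=\tfrac i2$ gives $\tfrac{25-5i}{32}$). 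The factor $\rho_2$ on the second term is a typo for $\rho_1$; equivalently, the $\rho$-free form of the clircle is $(|z|^2+1)\rho_1+\bar\rho_2 z-\rho_2\bar z=0$. Your derivation of $\rho_1$ and $\rho_2$ from the linear system
\begin{equation*}
\bar z_j\,\rho+z_j\,\bar\rho=1+|z_j|^2,\qquad j=1,2,
\end{equation*}
produces exactly this corrected equation, and everything else you say (the inversion-invariance, which holds regardless of the constant multiplying $\bar\rho z+\rho\bar z$; the identity $|z-\rho|^2=|\rho|^2-1$ from completing the square; $R^2=|z_1-\rho|^2>0$ because $z_1\in\mathbb D$; the diameter case when $\rho_1=0$) is sound for it. You should state explicitly that you are proving the corrected equation, and, if you wish to cover the lemma's literal claim about the inverse points $z_j/|z_j|^2$, note the tacit assumption $z_1,z_2\neq 0$.
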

\begin{rem}
    \textnormal{The clircle represented by \eqref{A11} is actually the trajectories of the hyperbolic geodesic passing through $z_1$ and $z_2$, if the disc $\mathbb{D}$ is assumed to be equipped with Poinca\'re hyperbolic metric.}
\end{rem}
\begin{thm}\label{PPN1}
  \textnormal{  Let $z_1,z_2$ be any two distinct points in $\mathbb{D}$, then the supremum of $M$ in \eqref{t1} is attained at $a^{+}$ in $\partial\mathbb{D}$, where $a^{+}$ is the point of intersection of $\partial\mathbb{D}$ with the hyperbolic geodesic ray starting from point $z_1$ and passing through $z_2$.}
\end{thm}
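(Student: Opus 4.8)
The plan is to reduce the supremum over the unit circle to an extremum problem for the single-variable function $f(t)$ in \eqref{t2} and to identify its critical points geometrically. First I would write $f(t)=|z_1-e^{it}|^2/|z_2-e^{it}|^2$ and compute $f'(t)$; setting $f'(t)=0$ yields, after clearing denominators, a relation of the form $|z_1-e^{it}|^2\frac{d}{dt}|z_2-e^{it}|^2=|z_2-e^{it}|^2\frac{d}{dt}|z_1-e^{it}|^2$. Since $\frac{d}{dt}|z_j-e^{it}|^2 = 2\,\mathrm{Im}(\bar z_j e^{it})$ (because $|z_j-e^{it}|^2 = |z_j|^2+1-2\mathrm{Re}(\bar z_j e^{it})$), the critical-point equation becomes a polynomial identity in $e^{it}$ whose roots we must locate on $\partial\mathbb{D}$. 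The key observation is that this identity is exactly the condition that $e^{it}$ lie on the clircle \eqref{A11} of the preceding Lemma: indeed that clircle is the locus of points $z$ with $\mathrm{Im}\big((\bar z_1 z - 1)(\overline{\bar z_2 z-1})\big)$ appropriately proportioned, and when restricted to $|z|=1$ it degenerates to the two intersection points $a^{+}, a^{-}$ of the hyperbolic geodesic through $z_1,z_2$ with $\partial\mathbb{D}$. So the two critical points of $f$ on the circle are precisely these two endpoints.

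Next I would determine which of $a^{+}$ and $a^{-}$ gives the maximum. Here $a^{+}$ is the endpoint reached by the geodesic ray \emph{from $z_1$ through $z_2$}, so $z_1$ is "farther" along the geodesic from $a^{+}$ than $z_2$ is; intuitively $|z_1-a^{+}|>|z_2-a^{+}|$ while $|z_1-a^{-}|<|z_2-a^{-}|$, forcing $f(a^{+})>1>f(a^{-})$. To make this rigorous I would use the Lemma's fact that the clircle meets $|z|=1$ orthogonally together with the chordal/hyperbolic structure: on the geodesic arc the ratio $|z_1-a^{+}|/|z_2-a^{+}|$ is monotone, and evaluating at the two endpoints (or comparing with the inverse points $z_j/|z_j|^2$, which lie outside $\mathbb{D}$ on the same clircle) pins down the sign. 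Since $f$ is continuous on the compact circle with exactly two critical points, the larger critical value is the global maximum and it occurs at $a^{+}$; hence $M(z_1,z_2)=|z_1-a^{+}|/|z_2-a^{+}|$, which is the claim.

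The main obstacle I anticipate is the bookkeeping needed to show that the critical-point equation $f'(t)=0$ coincides with membership in the clircle \eqref{A11}, and then the orientation argument distinguishing $a^{+}$ from $a^{-}$. The algebra in the first part is routine but needs care with complex conjugates and with the degenerate case $\rho_1=0$ (when the clircle is a straight line, i.e. $0,z_1,z_2$ collinear), which should be handled as a limiting case or checked directly. The second part is where a purely computational approach is least illuminating; the cleanest route is to invoke the M\"obius invariance of the cross-ratio underlying $M$ to normalize $z_1=0$ (or $z_2=0$), in which case $f(t)$ becomes explicit and the maximizing point is visibly the endpoint of the ray from $z_1$ through $z_2$, and then transport the conclusion back by the invariance. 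I would present the normalized computation as the core of the argument and remark that the general case follows by a M\"obius automorphism of $\mathbb{D}$.
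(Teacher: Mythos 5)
Your first route is essentially the paper's proof: the authors compute $f'(t)$ for $f(t)=|z_1-e^{it}|^2/|z_2-e^{it}|^2$, reduce $f'(t)=0$ on $|\zeta|=1$ to $\zeta\bar\rho+\bar\zeta\rho=2$, which is exactly the restriction of the clircle \eqref{A11} to $\partial\mathbb{D}$, and so obtain the two critical points $a^{\pm}=(1\pm iR)/\bar\rho$; they treat the degenerate collinear case $z_1\bar z_2-z_2\bar z_1=0$ separately, just as you anticipate for $\rho_1=0$. One place where you are actually more careful than the paper: in the non-degenerate case the paper simply asserts that the maximum occurs at $a^+$, whereas your comparison $f(a^+)>1>f(a^-)$ (via $|z_1-a^+|>|z_2-a^+|$, which holds because $z_2$ lies between $z_1$ and $a^+$ on an arc of the orthogonal circle subtending less than a semicircle at its centre $\rho$, so the Euclidean chord to $a^+$ is monotone along the arc) together with the fact that $f$ has exactly two critical points on the compact circle genuinely closes this gap. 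Your proposed ``core'' argument via normalization is a different route not taken in the paper, and it is the cleaner one: with $z_1=0$ the supremum of $1/|z_2-a|$ is visibly attained at $a=z_2/|z_2|$, the endpoint of the radial (hyperbolic) ray. The one point you must state precisely there is that $M$ itself is \emph{not} M\"obius invariant (only the symmetrized Apollonian semi-metric is); what you actually need, and what is true, is that for a disc automorphism $T$ one has $|Tz_1-Ta|/|Tz_2-Ta|=\bigl(|T'(z_1)|/|T'(z_2)|\bigr)^{1/2}\,|z_1-a|/|z_2-a|$ with the prefactor independent of $a$, so the \emph{location} of the maximizer is M\"obius equivariant even though the value of $M$ is not preserved. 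With that caveat made explicit, both of your routes are sound.
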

\begin{proof}
\textnormal{In view of \eqref{t1} and \eqref{t2}, if the supremum  in \eqref{t1} is attained at the point $\zeta=e^{it}$, then $f'(t)=0$. From \eqref{t2} we have}
\begin{equation} \label{t20}
    f(t)=\frac{|z_1|^2+1-z_1 e^{-it}-\bar{z_1}e^{it}}{|z_2|^2+1-z_2 e^{-it}-\bar{z_2}e^{it}}.
\end{equation}
\textnormal{Differentaiting \eqref{t20} with respect to $t$ we have}
\begin{equation}\label{t11}
\begin{split}
    f'(t)=&\frac{\sin t[(1+|z_2|^2)(z_1+\bar{z_1})-(1+|z_1|^2)(z_2+\bar{z_2})]}{(|z_2|^2+1-z_2 e^{-it}-\bar{z_2}e^{it})^2} \\ & +\frac{i\cos t[(1+|z_2|^2)(z_2-\bar{z_2})-(1+|z_2|^2)(z_2-\bar{z_2})]+2i(z_2\bar{z_1}-z_1\bar{z_2})}{(|z_2|^2+1-z_2 e^{-it}-\bar{z_2}e^{it})^2}.
    \end{split}
\end{equation}
\newline
Solving, $f'(t)=0$. We obtain
\begin{equation}
 \begin{split}\label{t19}
    &\cos t[(1+|z_2|^2)(z_1-\bar{z_1})-(1+|z_1|^2)(z_2-\bar{z_2})]\\&-i\sin t[(1+|z_2|^2)(z_1+\bar{z_1})-(1+|z_1|^2)(z_2+\bar{z_2}) =2(z_1\bar{z_2}-z_2\bar{z_1}).
    \end{split}
\end{equation}
Here, we consider two cases:\\
\newline
\textbf{CASE(1): $z_1\bar{z_2}-z_2\bar{z_1}=0$ :}
Since $z_1$ and $z_2$ are distinct points, therefore, either $z_1\not=0$ or $z_2\not=0$. Without loss of generality, we can assume $z_2\not=0$, and then  $z_1\bar{z_2}-z_2\bar{z_1}=0$ is equivalent to $\frac{z_1}{z_2}=\frac{\bar{z_1}}{\bar{z_2}}$. Hence, $\frac{z_1}{z_2}$ is a real number.\\

\noindent Then the argument of $z_1$ and $z_2$ are either same or differ by an integral multiple of $\pi$. Let us first assume that argument of $z_1$and $z_2$ are same and assume that $z_1=r_1e^{i\theta}$ and  $z_2=r_2e^{i\theta}$  such that  $0<r_1 < r_2< 1$.\\
From \eqref{t19} we have,
\begin{equation}\label{t16}
\begin{split}
    & \cos t[(1+r_2^2)r_1(e^{i\theta}-e^{-i\theta})-(1+r_1^2)r_2(e^{i\theta}-e^{-i\theta})]\\
     &-i \sin t[(1+r_2^2)r_1(e^{i\theta}+e^{-i\theta})-(1+r_1^2)r_2(e^{i\theta}+e^{-i\theta})]=0,
\end{split}
\end{equation}

\begin{equation*}
\mbox{or}~~~~~    [\cos t(e^{i\theta}-e^{-i\theta})-i\sin t({e^{i\theta}+e^{-i\theta})}](r_1-r_2)(1-r_1r_2)=0.
\end{equation*}
After a small calculation one can see,
\begin{equation*}
    (e^{i(\theta-t)}-e^{-i(\theta-t)})[(r_1-r_2)(1-r_1r_2)]=0.
\end{equation*}
Since $r_1-r_2\not=0$ as ($1 > r_2>r_1>0$) and $1-r_1r_2\not=0$. Therefore,
\begin{equation*}
    (e^{i(\theta-t)}-e^{-i(\theta-t)})=0,~~\mbox{i.e.},~~ 2i \sin (t-\theta)=0,
\end{equation*}
Hence, either $t=\theta$ or $\theta+\pi$. Also one can see $f''(t)<0$ at $t=\theta$. Therefore $f$ attains  the maximum value at $t=\theta$, i.e., the supremum in equation \eqref{t1} is achieved at the point \textbf{$\zeta=a^{+}=e^{i\theta}$}. In this case $a^+$ is the point of intersection of the ray starting from $z_1$ and passing through $z_2$ with the $\partial \mathbb D$. Since $z_1$ and $z_2$ lie on the same diameter, the ray through $z_1$ and $z_2$ is actually the hyperbolic geodesic ray in the unit disc $\mathbb{D}$ equipped with the Poinca\'re hyperbolic metric and thus $a^+$ is actually the point of intersection of this geodesic ray  through $z_1$ and $z_2$ with $\partial \mathbb D$.\\

\noindent \textbf{CASE(2) : $z_2\bar{z_1}-z_1\bar{z_2}\not =0$ :} If the supremum of \eqref{t1} is attains at $\zeta=e^{it}$. Then by \eqref{t19} we have $f'(t)=0$ and obtain
\begin{equation}\label{t10}
\begin{split}
   &\cos t\left[\frac{(1+|z_2|^2(z_1-\bar{z_1})-(1+|z_1|^2)(z_2-\bar{z_2})}{z_1\bar{z_2}-z_2\bar{z_1}}\right]\\ &-i\sin t\left[\frac{(1+|z_2|^2(z_1+\bar{z_1})-(1+|z_1|^2)(z_2+\bar{z_2})}{z_1\bar{z_2}-z_2\bar{z_1}}\right]=2.  
\end{split}
\end{equation}
Consequently,
\begin{equation}\label{t4}
   (\rho+\bar{\rho}) \cos t-i (\rho-\bar{\rho})\sin t=2,
\end{equation}
where $\rho$ is given in \eqref{t3}. From above equation it is clear that circles $|z|=1$ and  $|z-\rho|=R $ intersects orthogonally to each other.\\
Let $\zeta=\cos t+i\sin t$ and $\bar{\zeta}=\cos t-i\sin t$ then  \eqref{t4} yields,
\begin{equation*}
\left(\frac{\zeta+\bar{\zeta}}{2}\right)(\rho+\bar{\rho}) -i\left(\frac{\zeta-\bar{\zeta}}{2i}\right)(\rho-\bar{\rho})=2, 
\end{equation*}
where $\zeta$ is the point of intersection  the circle $|z|=1$ and $|z-\rho|=R $. Thus,
\begin{equation}\label{A1}
    (\zeta+\bar{\zeta})(\rho+\bar{\rho})-(\zeta-\bar{\zeta})(\rho-\bar{\rho})=4.~~\mbox{i.e.},~~ \zeta\bar{\rho}+\bar{\zeta}\rho=2.
\end{equation}
Since $\zeta\in \partial\mathbb{D}$ therefore $\bar{\zeta}=\frac{1}{\zeta}$, putting in \eqref{A1} and we obtain
\begin{equation}\label{.1}
    \zeta=\frac{1\pm\sqrt{1-|\rho|^2}}{\bar{\rho}}.
\end{equation}
Using \eqref{t5} in \eqref{.1} we get $\displaystyle \zeta=\frac{1\pm iR}{\bar{\rho}}$.\\
More precisely, the maximum value attains at $a^+$ and is given by
\begin{eqnarray*}
    a^+&=&\frac{1+iR}{\bar{\rho}}=\frac{(z_1\bar{z_2}-z_2\bar{z_1})}{(\bar{z_2}-\bar{z_1})-\bar{z_2}\bar{z_1}(z_2-z_1)}+\frac{|z_2-z_1||1-z_1\bar{z_2}|}{(\bar{z_2}-\bar{z_1})-\bar{z_2}\bar{z_1}(z_2-z_1)}.
\end{eqnarray*}
This shows that $a^+$ is the intersecting point of $|z|=1$ and hyperbolic ray starting from $z_1$ and passing through $z_2$ which is part of the circle $|z-\rho|=R$.

\end{proof}

\begin{center}
\begin{tikzpicture}[scale=0.8]

\def\R{3}
\draw[thick, blue] (0,0) circle (\R);
\node at (-0.3,0.2) {\Large $D$};
\node at (0,-3.3) {\Large $\partial D$};

\def\xc{4.0}
\def\yc{0}
\pgfmathsetmacro{\rOrth}{2.6458}
\node at (2.5,2.5) {$a^+$};
\node at (2.35,-2.45) {$a^-$};
\draw[red, dashed] (\xc,\yc) circle (\rOrth);

\filldraw[black] (1.75, 1.42) circle (0.06) node[anchor=east] {\Large\textbf{$z_2$}};
\filldraw[black] (1.75, -1.4) circle (0.06) node[anchor=east] {\Large\textbf{$z_1$}};

\filldraw[black] (2.25, 1.99) circle (0.06); 
\filldraw[black] (2.25, -1.984) circle (0.06); 
\end{tikzpicture}
\end{center}
\begin{rem}
   \textnormal{ If we compute the Apollonian distance $\delta_A(z_2,z_1)$ from $z_2$ to $z_1$ by a similar way the supremum in    \eqref{t1} is attained at $a^{-}$, where $a^{-}$ is the point of intersection of $\partial\mathbb{D}$ with the hyperbolic geodesic ray starting from the point $z_2$ and passing through $z_1$.}
    \newline
   \textnormal{ More precisely $a^{-}$ is given by}
  \begin{eqnarray}
    a^{-}&=& \frac{1-iR}{\bar{\rho}}
   = \frac{(z_1\bar{z_2}-z_2\bar{z_1})}{(\bar{z_2}-\bar{z_1})-\bar{z_2}\bar{z_1}(z_2-z_1)}-\frac{|z_2-z_1||1-z_1\bar{z_2}|}{(\bar{z_2}-\bar{z_1})-\bar{z_2}\bar{z_1}(z_2-z_1)}.
\end{eqnarray}
\end{rem}
\begin{rem}
\textnormal{The above results have already been investigated by Papadopoulos and Troyanov \cite[Proposition $5.4$]{APMT} for the unit disc $\mathbb{D}\subset \mathbb{C}$ through a different technique. However by a small computation it can be checked that both the results are same.}
\end{rem}
\begin{proposition}
 \textnormal{ The Apollonian weak metric $\delta_A$ in the unit disc $\mathbb{D}$ is given by (see \cite[Theorem $2$]{APMT} )}
    \begin{equation}\label{t6}
   \delta_A(z_1,z_2) =\log M(z_1,z_2)=\log \left(\frac{|z_1-z_2|+|z_1\bar{z_2}-1|}{\big|1-|z_2|^2\big|}\right),~~\forall z_1,z_2 \in \mathbb{D}.
    \end{equation}
\end{proposition}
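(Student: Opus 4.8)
The plan is to bypass the explicit maximizing boundary point $a^{+}$ and instead exploit a simple transformation law for $M$ under automorphisms of $\mathbb{D}$, which reduces the general formula to the trivial case $z_2=0$. (The brute-force alternative, plugging the explicit $a^+$ from Theorem \ref{PPN1} into $M$, also works, and I sketch it at the end.)

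First I would record how $M$ changes under a M\"obius map. If $g(z)=\frac{az+b}{cz+d}$ with $ad-bc\neq0$ maps $\mathbb{D}$ onto $\mathbb{D}$, then $a\mapsto g(a)$ is a bijection of $\partial\mathbb{D}$, and from the identity $g(u)-g(v)=\frac{(ad-bc)(u-v)}{(cu+d)(cv+d)}$ one gets, for $x,y\in\mathbb{D}$ and $a\in\partial\mathbb{D}$, that $\left|\frac{g(x)-g(a)}{g(y)-g(a)}\right|=\left|\frac{x-a}{y-a}\right|\cdot\frac{|cy+d|}{|cx+d|}$. The crucial observation is that the factor $\frac{|cy+d|}{|cx+d|}$ does not depend on $a$, so it survives the supremum, giving $M(g(x),g(y))=\frac{|cy+d|}{|cx+d|}\,M(x,y)$.

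Next I would compute the base case $M(w,0)=\sup_{|a|=1}\bigl|\tfrac{w-a}{-a}\bigr|=\sup_{|a|=1}|w-a|=1+|w|$ (the supremum being attained at $a=-w/|w|$ when $w\neq0$, and trivially equal to $1$ when $w=0$), and then apply the transformation law to the disc automorphism $T_{z_2}(z)=\frac{z-z_2}{1-\bar z_2 z}$, for which $cz+d=1-\bar z_2 z$, $T_{z_2}(z_2)=0$, and $T_{z_2}(z_1)=w:=\frac{z_1-z_2}{1-\bar z_2 z_1}$. This gives $M(w,0)=\frac{|1-|z_2|^2|}{|1-\bar z_2 z_1|}\,M(z_1,z_2)$, hence, expanding $1+|w|$, $M(z_1,z_2)=(1+|w|)\,\frac{|1-\bar z_2 z_1|}{|1-|z_2|^2|}=\frac{|z_1-z_2|+|1-\bar z_2 z_1|}{|1-|z_2|^2|}$; since $\bar z_2 z_1=z_1\bar z_2$ this is exactly \eqref{t6} after taking logarithms.

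For the alternative brute-force route from Theorem \ref{PPN1}: writing $a^{+}=\frac{A+B}{C}$ with $A=z_1\bar z_2-z_2\bar z_1$, $B=|z_2-z_1|\,|1-z_1\bar z_2|$ and $C=(\bar z_2-\bar z_1)-\bar z_1\bar z_2(z_2-z_1)$, one checks the factorizations $z_1C-A=\bar z_1(z_2-z_1)(1-z_1\bar z_2)$ and $z_2C-A=\bar z_2(z_2-z_1)(1-z_2\bar z_1)$, substitutes into $M=\left|\frac{z_1C-A-B}{z_2C-A-B}\right|$, and cancels the common positive factor $|z_2-z_1|\,|1-z_1\bar z_2|$. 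I expect the only genuine obstacle in either route to be bookkeeping with moduli of complex quantities: in the first route, verifying that the factor in the transformation law is truly independent of the boundary point (so that the supremum is unaffected and the automorphism merely rescales $M$); in the second, handling $|z_iC-A-B|$ carefully, since $z_iC-A$ is a complex number whose modulus is $|z_i|\,|z_2-z_1|\,|1-z_1\bar z_2|$ but whose argument must be accounted for when it is combined with the positive real number $B$.
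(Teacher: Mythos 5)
Your main argument is correct and complete, and it is genuinely different from what the paper does: the paper offers no proof of this proposition at all, simply citing Papadopoulos--Troyanov \cite[Theorem 2]{APMT}, while the surrounding machinery it does develop (Theorem \ref{PPN1}, the explicit maximizer $a^{+}$) points toward the brute-force substitution you only sketch at the end. Your Möbius route is clean: the identity $g(u)-g(v)=\frac{(ad-bc)(u-v)}{(cu+d)(cv+d)}$ does make the factor $\frac{|cy+d|}{|cx+d|}$ independent of the boundary point $a$ (the $(ca+d)$ terms cancel in the ratio), a disc automorphism restricts to a bijection of $\partial\mathbb{D}$ so the supremum is preserved up to that constant factor, the base case $M(w,0)=1+|w|$ is immediate, and the bookkeeping with $T_{z_2}$ checks out, giving $M(z_1,z_2)=\frac{|z_1-z_2|+|1-z_1\bar z_2|}{1-|z_2|^2}$ exactly as in \eqref{t6} (note $|1-|z_2|^2|=1-|z_2|^2$ on $\mathbb{D}$). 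What this buys over the paper's implicit route is that you never need to locate the extremal boundary point, and the transformation law you isolate is exactly the conformal-factor behaviour that explains why the symmetrized (Barbilian) metric is Möbius invariant, which the paper uses later without comment. Conversely, the paper's route via $a^{+}$ gives geometric information (the maximizer lies on the hyperbolic ray from $z_1$ through $z_2$) that your argument does not reproduce; your factorizations $z_iC-A$ in the brute-force sketch are correct, but as you note the step $|z_iC-A-B|$ mixes a complex number with the positive real $B$ and would need the extra observation that $\bar z_1 u-|u|$ and $\bar z_2 u'-|u'|$ have computable moduli, so that route is not yet a proof as written. Since the first argument stands on its own, this is not a gap.
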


\begin{thm}
 \textnormal{ The Apollonian weak-Finsler structure $\mathcal{F}_A(z,\xi)$  of the Apollonian weak metric $\delta_A$ in the unit disc $\mathbb{D}$, is given by}
    \begin{equation}\label{00}
            \mathcal{F}_A(z,\xi) = \frac{|\xi|}{1-|z|^2}+\frac{\operatorname{Re}(z\bar{\xi)}}{1-|z|^2},
    \end{equation}
    \textnormal{where $z\in \mathbb{D}$  and }$\xi\in T_z\mathbb{D}$.
\end{thm}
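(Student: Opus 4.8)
The plan is to recover $\mathcal{F}_A$ as the infinitesimal (tangent) norm of the weak metric $\delta_A$. Since $\delta_A$ is a weak \emph{length} metric on the convex domain $\mathbb{D}$ --- this being the content of the variational description of the Apollonian weak metric on convex sets, cf.\ Yamada \cite{SY} and Papadopoulos--Troyanov \cite{APMT} --- its generating Finsler structure is given by
$$\mathcal{F}_A(z,\xi) \;=\; \lim_{t\to 0^+}\frac{\delta_A\bigl(z,\; z+t\xi\bigr)}{t},$$
where it is essential that the base point $z$ sits in the \emph{first} slot of $\delta_A$: because $\delta_A$ is not symmetric, keeping $z$ in the second slot would instead produce the Randers structure $\alpha-\beta$. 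Thus the theorem reduces to a first-order Taylor expansion of the closed formula \eqref{t6} along the Euclidean ray $t\mapsto z+t\xi$.

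Concretely I would put $z_1=z$ and $z_2=z+t\xi$ with $t>0$ small and expand the three quantities in \eqref{t6}. One has $|z_1-z_2|=t|\xi|$ exactly; writing $z_1\bar z_2-1=-(1-|z|^2)+t\,z\bar\xi$ gives
$$\bigl|z_1\bar z_2-1\bigr|=\sqrt{(1-|z|^2)^2-2t(1-|z|^2)\operatorname{Re}(z\bar\xi)+t^2|z|^2|\xi|^2}=(1-|z|^2)-t\operatorname{Re}(z\bar\xi)+O(t^2);$$
and since $|z_2|^2=|z|^2+2t\operatorname{Re}(z\bar\xi)+t^2|\xi|^2$, for small $t$ the number $1-|z_2|^2$ is positive and equals $(1-|z|^2)-2t\operatorname{Re}(z\bar\xi)+O(t^2)$. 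Substituting, the argument of the logarithm in \eqref{t6} becomes
$$\frac{(1-|z|^2)+t\bigl(|\xi|-\operatorname{Re}(z\bar\xi)\bigr)+O(t^2)}{(1-|z|^2)-2t\operatorname{Re}(z\bar\xi)+O(t^2)}=1+\frac{t\bigl(|\xi|+\operatorname{Re}(z\bar\xi)\bigr)}{1-|z|^2}+O(t^2),$$
so $\delta_A(z,z+t\xi)=t\,\dfrac{|\xi|+\operatorname{Re}(z\bar\xi)}{1-|z|^2}+O(t^2)$; dividing by $t$ and letting $t\to0^+$ gives exactly \eqref{00}. I would close by observing that $\mathcal{F}_A$ so obtained is a genuine Finsler structure, namely the Randers metric $\alpha+\beta$ with $\alpha=|\xi|/(1-|z|^2)$ the Poincar\'e (Riemannian) metric and $\beta=\operatorname{Re}(z\bar\xi)/(1-|z|^2)$ a $1$-form with $\|\beta\|_{\alpha}=|z|<1$ on $\mathbb{D}$, hence positive definite --- this overlaps with Theorem \ref{thm01} and will be re-derived there.

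The main difficulty is not the algebra but the \emph{conceptual} first step: to justify that the Finsler structure ``of'' a weak metric is precisely this directional limit, one must know that $\delta_A$ is a length weak metric whose length functional is $\gamma\mapsto\int \mathcal{F}_A(\gamma,\dot\gamma)\,dt$. This can be taken from the variational characterization of the Apollonian weak metric on convex domains; alternatively, one can argue directly that $\delta_A(p,q)\le\int_0^1\mathcal{F}_A(\gamma,\dot\gamma)\,dt$ for every curve $\gamma$ from $p$ to $q$, with equality along the hyperbolic geodesic rays identified in Theorem \ref{PPN1}, which simultaneously recovers the geodesic statement of \cite[Theorem 3]{APMT}. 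Once this is in place, the computation above is entirely routine.
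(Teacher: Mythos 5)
Your proposal is correct and follows essentially the same route as the paper: set $z_1=z$, $z_2=z+t\xi$ in the closed formula \eqref{t6}, Taylor-expand $|z_1\bar z_2-1|$ and $1-|z_2|^2$ to first order in $t$, and identify $\mathcal{F}_A$ with the directional limit $\lim_{t\to0^+}\delta_A(z,z+t\xi)/t$, which the paper justifies by citing the Busemann--Mayer theorem (your discussion of the length-metric/variational underpinning plays the same role). Your algebraic expansion agrees term by term with the paper's, so there is nothing further to add.
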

\begin{proof}
    Taking  $z_1=z$ and $z_2=z+t\xi $ $(0 < t \in \mathbb{R})$ in $\mathbb{D}$. Then,
    \begin{equation}\label{t7}
    |z_2-z_1|=t|\xi|,
    \end{equation}
    \begin{eqnarray}
 \nonumber |z_1\bar{z_2}-1|&=&|z(\bar{z}+t\bar{\xi})-1| ={\mid|z|^2+tz\bar{\xi}-1\mid}\\  
    \nonumber &=& {(1-|z|^2)\left| 1-\frac{tz\bar{\xi}}{1-|z|^2}\right|}\\
   \label{t9} &=& (1-|z|^2)\left[1-t \operatorname{Re}\left(\frac{z\bar{\xi}}{1-|z|^2}\right)+o(t)\right].
   \end{eqnarray}
Employing  \eqref{t7} and \eqref{t9} in \eqref{t6} we obtain,\\
\begin{equation*}
     \delta_A(z,z+t\xi)=   \log\left(\frac{t|\xi|+(1-|z|^2)\left(1-t \operatorname{Re} \left(\frac{z\bar{\xi}}{1-|z|^2}\right)+o(t)\right)}{1-|z+t\xi|^2}\right).
\end{equation*}
which gives
\begin{equation*}
 \delta_A(z,z+t\xi)=   \log\left({t|\xi|+(1-|z|^2)\left(1-t \operatorname{Re} \left(\frac{z\bar{\xi}}{1-|z|^2}\right)+o(t)\right)}\right)-\log({1-|z+t\xi|^2}).
\end{equation*}
Thus, 
\begin{equation}\label{x30}
\begin{split}
\delta_A(z,z+t\xi)=& \log\Bigg[(1-|z|^2)\left({1+t  \left(\frac{|\xi|-\operatorname{Re}(z\bar{\xi})}{1-|z|^2}\right)+\frac{o(t)}{1-|z|^2}}\right)\Bigg]\\&-\log \Bigg[(1-|z|^2)\left(1-t\left(\frac{2\operatorname{Re(z\bar{\xi})}}{1-|z|^2}\right)+\frac{o(t)}{1-|z|^2}\right)\Bigg].
 \end{split}
 \end{equation}
Expanding R.H.S of \eqref{x30} and neglecting the higher order terms, we get
\begin{equation}
   \delta_A(z,z+t\xi)= \frac{t|\xi|-t\operatorname{Re}(z\bar{\xi})}{1-|z|^2}+\frac{2t\operatorname{Re}(z\bar{\xi})}{1-|z|^2}= \frac{t|\xi|+t\operatorname{Re}(z\bar{\xi})}{1-|z|^2}.
\end{equation}

Hence, {by Busemann-Mayer theorem \cite[\S 6.3]{DSSZ}, we yields}
\begin{eqnarray}\label{Fin1}
    \mathcal{F}_A(z,\xi) = \lim  \limits_{t \rightarrow 0 } \frac{\delta_A \left(z,z+t\xi\right)}{t}=\frac{|\xi|+\operatorname{Re}(z\bar{\xi})}{1-|z|^2}.
\end{eqnarray}
\end{proof}

\begin{proof}[Proof of Theorem \ref{thm01}]
Rewriting the expression of the Finsler structure obtained in (\ref{Fin1}) in real coordinates, i.e.,  $z=x=(x^1,x^2)$ and $\xi=(\xi^1,\xi^2)$, we have
\begin{equation}\label{x6}
   \mathcal{F}_A(x,\xi) = \frac{|\xi|}{1-|x|^2}+\frac{\langle x,\xi\rangle}{1-|x|^2},
\end{equation}
where $\langle . \rangle$ denotes the usual inner product in $\mathbb{R}^2$.

\noindent In view of \eqref{x6} we have $F=\alpha+\beta$. If we write  $\alpha (\xi)=\sqrt{a_{ij}\xi^i\xi^j}$, then 
  
  \begin{equation}\label{eqn2.5.118}
   a_{ij}=\frac{\delta_{ij}}{(1-|x|^2)^2},
  \end{equation}
   $\det(a_{ij})=\frac{1}{(1-|x|^2)^4}$, and the coefficients of its inverse matrix $(a_{ij})^{-1}$ are given by, 
  \begin{equation}\label{eqn2.5.119}
a^{ij}= (1-|x|^2)^2 \delta^{ij}.
  \end{equation}
 Furthermore, let $\beta(x,\xi)  =b_i(x)\xi^i$; then the coefficients $b_i(x)$ of the $1$-form $\beta$ are given by
 \begin{equation}\label{eqn2.5.120}
b_i(x)=\frac{\delta_{ij} x^j }{(1-|x|^2)}, 
 \end{equation}
 and hence \begin{equation}\label{eqn2.5.121}
||\beta||^2_{\alpha}=a^{ij}b_ib_j=|x|^2< 1.
\end{equation}
\noindent It is easy to observe that $\beta=df(x)$, where $f(x)=-\frac{1}{2}\log\left(1-|x|^2 \right).$
Thus, $\mathcal{F}_A(x,\xi)=\alpha(x,\xi)+\beta(x,\xi)$  is a Randers structure with a closed $1$-form $\beta$.
\end{proof}
\begin{rem} \label{remA}
\textnormal{ The  Apollonian weak-Finsler structure $\mathcal{F}_A$ in the unit disc $\mathbb{D}$  has closed $1$-form (see \eqref{eqn2.5.121}), and therefore,  with the help of Theorem \ref{thm1} we conclude that the geodesic trajectories of the Apollonian weak-Finsler structure  $\mathcal{F}_A$ in the unit disc $\mathbb{D}$ are the same as those of the Poincaré metric in the unit disc $\mathbb{D}$. }
\end{rem}
\begin{rem}
    \textnormal{It is well known that the Finsler structure of the Funk metric in Euclidean unit disc is given by $\mathcal{F}_F(x,\xi)=\alpha(x,\xi)+\beta(x,\xi),$ where $\alpha(x,\xi)=\frac{\sqrt{|\xi|^2-(|x|^2|\xi|^2-\langle x,\xi \rangle^2)}}{1-|x|^2}$
    is the Klein metric on the unit disc and $\beta(x,\xi)=\frac{\langle x,\xi\rangle}{(1-|x|^2)}$. That is, the Randers structure $\mathcal{F}(x,\xi)=\alpha(x,\xi)+\beta(x,\xi)$ corresponds to the Finsler structure of the Apollonian weak metric if $\alpha$ is the Poincar\'e metric on unit disc and that of the Funk metric if $\alpha$ is the Klein metric on the unit disc with the same one form $\beta=\frac{\langle x,\xi\rangle}{(1-|x|^2)}$ }. 
\end{rem}
\begin{rem}
   \textnormal{The arithmetic symmetrization of Apollonian weak metric on a convex set is a semi metric called Barbilian metric. It is denoted by $S\delta_{A}$}
    \begin{equation}
       S\delta_A(z_1,z_2)=\frac{1}{2} \left( \delta_A(z_1,z_2)+\delta_A(z_2,z_1) \right)=\frac{1}{2}\log \left(\frac{|z_1\bar{z_2}-1|+|z_2-z_1|}{|z_1\bar{z_2}-1|-|z_2-z_1|}\right).
    \end{equation}
     \textnormal{ However the Finsler structure of the Barbilian metric on the unit disc $\mathbb{D}$ is a Riemannian metric which coincides with the Poincar\'e metric of the unit disc and given by} 
    \begin{equation}
        S\mathcal{F}_A(z,\xi)=\frac{1}{2} \left( \mathcal{F}_A(z,\xi)+\mathcal{F}_A(z,-\xi) \right)=\frac{|\xi|}{1-|z|^2}.
        \end{equation}
\end{rem}
\begin{que}
   \textnormal{ It is an open question to find all convex set on which the Finsler structure of the Barbilian metric is Riemannian?}
\end{que}

\begin{proof}[Proof of Theorem \ref{thm3.}] The indicatrix for the Apollonian weak-Finsler structure $\mathcal{F}_A$ is given by 
\begin{equation*}
S_x = \left\{\xi \in T_x\mathbb{D}:\mathcal{F}_A(\xi)=1 \right\}.
\end{equation*}
Thus by \eqref{x6} we have 
\begin{eqnarray}\label{m1}
  \frac{|\xi|}{1-|x|^2}+\frac{\langle x,\xi\rangle}{1-|x|^2}=1, ~ \forall~\xi\in T_x\mathbb{D}.
\end{eqnarray}
Since $\xi \in T_x\mathbb{D} $, let us write $\eta=x+\xi$. Therefore
the above equation can be rewritten as,
 \begin{eqnarray} \label{x4}
    |\eta -x|=(1-|x|^2)-\langle x,\eta -x\rangle. 
\end{eqnarray}
Squaring on both sides and simplifying,  we obtain
\begin{equation}\label{x50}
    \eta_1^2(1-x_1^2)+\eta_2^2(1-x_2^2)-2x_1x_2\eta_1\eta_2=1-x_1^2-x_2^2.
\end{equation}
Comparing \eqref{x50} with the general equation of the conic $A(\eta^1)^2+B\eta^1\eta^2+C(\eta^2)^2+D\eta^1+E\eta^2+F=0$, shows that the indicatrix of given Finsler structure $\mathcal{F}_A$  at the point $x \in \mathbb{D}$ is an ellipse, as
\begin{eqnarray}
\nonumber   B^2-4AC= 4(x^1)^2(x^2)^2-4(1-(x^1)^2)(1-(x^2)^2)=-4(1-|x|^2)< 0.
\end{eqnarray}
Further, from \eqref{x50}, it can be deduced that  eccentricity  of the ellipse (indicatrix) is $|x|$, foci of the ellipse are $x$ and $-x$, centered at the origin with its major axis as the line joining origin to the point $x$.
\end{proof}

\begin{tikzpicture}[scale=3]
    \draw[->] (-1.4, 0) -- (1.4, 0) node[right] {$x$};
    \draw[->] (0, -1.2) -- (0, 1.2) node[above] {$y$};

    \draw[blue, thick] (0, 0) circle(1);
    \node[blue] at (-0.85, 0.85) {$D$};

    \newcommand{\drawImplicitEllipse}[4]{%
        \def\xone{#1}
        \def\xtwo{#2}
        \def\labeltext{#3}
        \def\ellipColor{#4}

        \pgfmathsetmacro{\qxx}{1 - \xone*\xone}
        \pgfmathsetmacro{\qyy}{1 - \xtwo*\xtwo}
        \pgfmathsetmacro{\qxy}{- \xone * \xtwo}
        \pgfmathsetmacro{\rhs}{1 - \xone*\xone - \xtwo*\xtwo}

        \pgfmathsetmacro{\trace}{\qxx + \qyy}
        \pgfmathsetmacro{\detq}{\qxx*\qyy - \qxy*\qxy}
        \pgfmathsetmacro{\temp}{sqrt((\qxx - \qyy)^2 + 4*\qxy*\qxy)}

        \pgfmathsetmacro{\lambdaone}{(\trace + \temp)/2}
        \pgfmathsetmacro{\lambdatwo}{(\trace - \temp)/2}

        \pgfmathsetmacro{\a}{sqrt(\rhs / \lambdaone)}
        \pgfmathsetmacro{\b}{sqrt(\rhs / \lambdatwo)}

        \pgfmathsetmacro{\theta}{0}
        \ifdim \qxy pt=0pt
            \pgfmathsetmacro{\theta}{0}
        \else
            \pgfmathsetmacro{\theta}{0.5 * atan2(2 * \qxy, \qxx - \qyy)}
        \fi

        \filldraw[red] (\xone,\xtwo) circle(0.01);
        \node[above right] at (\xone,\xtwo) {\labeltext};

        \draw[thick, \ellipColor, samples=200, smooth, variable=\t, domain=0:360]
            plot
            (
                { \a * cos(\t)*cos(\theta) - \b * sin(\t)*sin(\theta) },
                { \a * cos(\t)*sin(\theta) + \b * sin(\t)*cos(\theta) }
            );
    }

    \drawImplicitEllipse{0.3}{0.3}{A}{red}
    \drawImplicitEllipse{0.5}{0.5}{B}{orange}
    \drawImplicitEllipse{0.68}{0.68}{C}{violet}

    \node[black!60!black, align=center] at (0, -1.4)
   {Indicatrices of Apollonian weak-Finsler structure\\
    of unit disc $\mathbb{D}$ at points A = (0.3,0.3), B =(0.5,0.5), and C = (0.68,0.68)};
   \end{tikzpicture}

\begin{rem}
\textnormal{Thus the Finsler structure of Apollonian weak metric on the unit disc $\mathbb{D}$ is a family of ellipses as its indicatrices, one in each tangent space with one of its foci at the origin of the tangent space(as shown in above figure). 
It is interesting to note that as we move the point $x$ towards the boundary of the disc $\mathbb{D}$, the indicatrix ellipse $S_x$ becomes thiner and thiner.}
\end{rem}
\section{Some curvatures of the Apollonian weak-Finsler structure }\label{sec4}
In this section, we explicitly obtain the expressions for the $S$-curvature, the Riemann curvature, the Ricci curvature and the flag curvature of the Apollonian weak-Finsler structure $\mathcal{F}_A$ 
on the unit disc $\mathbb{D}$.
\subsection{Spray coefficients and $S$-curvatures of the Apollonian weak-Finsler structure $\mathcal{F}_A$}\label{sb3}
In this subsection,
we recall the formula for $S$-curvature of 
a general Randers structure $F=\alpha+\beta$, where $\alpha(x, \xi)=\sqrt{a_{ij}\xi^i\xi^j}$ and $\beta(x, \xi)=b_i(x)\xi^i$.\\
Let  $\bar{\Gamma}^k_{ij}(x)$  denote the  Christoffel symbols of Riemannian metric $\alpha$. Then we have,
 \begin{equation}\label{eqnn 4.3.48}
 b_{i|j}:=\frac{\partial b_i}{\partial x^j}-b_k \bar{\Gamma}^k_{ij}.
 \end{equation}
 We introduce the following notations,
 \begin{equation}\label{eqnn 4.3.50}
  r_{ij}:=\frac{1}{2}\left( b_{i|j}+b_{j|i}\right),~~ s_{ij}:=\frac{1}{2}\left( b_{i|j}-b_{j|i}\right).
  \end{equation}
 \begin{equation}\label{eqnn 4.3.51}
   s^i_j:=a^{ih}s_{hj},~~ s_j:=b_i s^i_j=b^j s_{ij}, ~~r_j:=b^i r_{ij}, ~~b^j=a^{ij} b_i,
   \end{equation}
\begin{equation}\label{eqnn 4.3.52}
e_{ij}:=r_{ij}+b_is_j+b_js_i,
\end{equation}
\begin{equation*}e_{00}:=e_{ij}\xi^i\xi^j,~~ s_0:=s_i\xi^i ~~ \text{and}~~ s^i_0:=s_j^i\xi^j.\end{equation*}\\
Now consider, 
 \begin{equation}\label{eqnA0}
      \rho:=\log \sqrt{\left( 1-||\beta||^2_\alpha\right)},~\mbox{and}~ \rho_0:=\rho_i\xi^i,\rho_i:=\rho_{x^i}(x).
  \end{equation}
It is well known that the $S$-curvature of the Randers structure $F=\alpha +\beta $ is given by,
\begin{equation}\label{eqnn 4.65}
 S=(n+1)\Big[\frac{e_{00}}{2F}-(s_0+\rho_0) \Big],
 \end{equation}
see \cite[$\S 3.2$]{CXSZ} for more details.

\begin{thm}\label{thm4.1}
    \textnormal{The  Apollonian weak-Finsler structure $\mathcal{F}_A$ on the disc $\mathbb{D}$,  given by \eqref{x6}, is projectively flat and its $S$-curvature is given by:}
   \begin{equation*}
      \textbf{S} (x,\xi)= \frac{3|\xi|\Big[ (1+|x|^2)|\xi|+2\langle x,\xi\rangle\Big]}{2\mathcal{F}_A(1-|x|^2)^2},~\forall (x,\xi) \in T\mathbb{D}
    \end{equation*}
\end{thm}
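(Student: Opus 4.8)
The plan is to apply the general $S$-curvature formula \eqref{eqnn 4.65} for a Randers structure $F=\alpha+\beta$ to the data of $\mathcal{F}_A$ recorded in the proof of Theorem \ref{thm01}, namely $a_{ij}=\delta_{ij}/(1-|x|^2)^2$, $a^{ij}=(1-|x|^2)^2\delta^{ij}$, $b_i=\delta_{ij}x^j/(1-|x|^2)$, and $\|\beta\|_\alpha^2=|x|^2$. The crucial structural fact, already established in Remark \ref{remA} via Proposition \ref{ppn2.3} and Theorem \ref{thm1}, is that $\beta=df$ is closed, hence $s_{ij}=0$; this immediately forces $s^i_j=0$, $s_j=0$, $s_0=0$, $s^i_0=0$, so that $e_{ij}=r_{ij}=b_{i|j}=b_{j|i}$ and the formula collapses to $\textbf{S}=(n+1)\big[\tfrac{r_{00}}{2F}-\rho_0\big]$ with $n=2$. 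For projective flatness I would simply invoke Proposition \ref{ppn2.3}: the Poincar\'e metric $\alpha$ has constant (negative) curvature, hence is projectively flat, and $\beta$ is closed, so $\mathcal{F}_A=\alpha+\beta$ is projectively flat; alternatively cite Theorem \ref{thm1} directly since $(\mathbb{D},\alpha)$ has constant curvature.

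The computational core is therefore to compute $r_{00}=b_{i|j}\xi^i\xi^j$ and $\rho_0=\rho_i\xi^i$ explicitly. First I would compute the Christoffel symbols $\bar\Gamma^k_{ij}$ of the conformal metric $a_{ij}=e^{2\phi}\delta_{ij}$ with $\phi=-\log(1-|x|^2)$, using the standard conformal formula $\bar\Gamma^k_{ij}=\delta^k_i\phi_j+\delta^k_j\phi_i-\delta_{ij}\phi_k$ where $\phi_i=\partial\phi/\partial x^i=2x^i/(1-|x|^2)$. Then $b_{i|j}=\partial_j b_i-b_k\bar\Gamma^k_{ij}$; since $b_i=x^i/(1-|x|^2)$ one has $\partial_j b_i=\delta_{ij}/(1-|x|^2)+2x^ix^j/(1-|x|^2)^2$, and after substituting the Christoffel symbols and contracting with $\xi^i\xi^j$ the terms should organize into a clean expression. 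I expect $r_{00}$ to come out proportional to $\big[(1-|x|^2)|\xi|^2+\text{(terms in }\langle x,\xi\rangle)\big]/(1-|x|^2)^2$ — more precisely I anticipate $r_{00}=\dfrac{|\xi|^2+2\langle x,\xi\rangle^2/(1-|x|^2)}{(1-|x|^2)^2}$ or a close variant, to be pinned down by the calculation. For $\rho_0$: from \eqref{eqnA0}, $\rho=\tfrac12\log(1-|x|^2)$, so $\rho_i=-x^i/(1-|x|^2)$ and $\rho_0=-\langle x,\xi\rangle/(1-|x|^2)$; note this is exactly $-\beta$, i.e. $\rho_0=-\mathcal{F}_A+\alpha$.

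Assembling, $\textbf{S}=3\big[\tfrac{r_{00}}{2\mathcal{F}_A}+\tfrac{\langle x,\xi\rangle}{1-|x|^2}\big]$. I would then put the bracket over the common denominator $2\mathcal{F}_A(1-|x|^2)^2$ — writing $\tfrac{\langle x,\xi\rangle}{1-|x|^2}=\tfrac{\langle x,\xi\rangle\,\mathcal{F}_A (1-|x|^2)}{\mathcal{F}_A(1-|x|^2)^2}$ and using $\mathcal{F}_A(1-|x|^2)=|\xi|+\langle x,\xi\rangle$ — and check that the numerator simplifies to $3|\xi|\big[(1+|x|^2)|\xi|+2\langle x,\xi\rangle\big]$. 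The main obstacle is purely the bookkeeping in the $r_{00}$ computation: the conformal Christoffel terms, the derivative-of-$b_i$ terms, and the $\langle x,\xi\rangle^2$ contributions must be tracked carefully so that the final collapse to the stated single-term numerator actually occurs; it is easy to drop a factor of $(1-|x|^2)$ or mis-sign a Christoffel contribution. A useful consistency check along the way is that the answer must be positively $1$-homogeneous of degree $1$ in $\xi$ (it is, since $r_{00}$ is degree $2$ and $\mathcal{F}_A$ is degree $1$), and that substituting $x=0$ gives $\textbf{S}=3|\xi|^2/(2|\xi|)=\tfrac32|\xi|=\tfrac32\mathcal{F}_A$, matching the lower bound of Theorem \ref{thm1.3} at the origin.
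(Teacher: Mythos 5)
Your proposal follows essentially the same route as the paper: the general Randers $S$-curvature formula \eqref{eqnn 4.65}, the observation that $\beta$ closed forces $s_{ij}=0$ and hence kills all the $s$-terms, the conformal Christoffel symbols of the Poincar\'e metric, the identity $\rho_0=-\langle x,\xi\rangle/(1-|x|^2)$, projective flatness via Proposition \ref{ppn2.3}, and the final common-denominator assembly using $\mathcal{F}_A(1-|x|^2)=|\xi|+\langle x,\xi\rangle$ are all exactly the paper's steps. The one item to repair is your anticipated value of $r_{00}$: carrying out the contraction you set up gives
\begin{equation*}
b_{i|j}=\frac{(1+|x|^2)\delta_{ij}-2x^ix^j}{(1-|x|^2)^2},\qquad r_{00}=\frac{(1+|x|^2)|\xi|^2-2\langle x,\xi\rangle^2}{(1-|x|^2)^2},
\end{equation*}
so the $\langle x,\xi\rangle^2$ term enters with a minus sign and the $|\xi|^2$ coefficient is $(1+|x|^2)$, not the variant you guessed; with this corrected $r_{00}$ the assembly you describe does produce the stated numerator $3|\xi|\big[(1+|x|^2)|\xi|+2\langle x,\xi\rangle\big]$.
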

\begin{proof}
From equation \eqref{eqnn 4.65}, to calculate the $S$-curvature of the Apollonian weak-Finsler structure $\mathcal{F}_A$,  we proceed as follows. The Christoffel symbols $\bar{\Gamma}^k_{ij}(x)$ of Riemannian  metric $\alpha$  are given by
 \begin{eqnarray}
\label{eqn0}  \bar{\Gamma}^k_{ij}(x)=\frac{2\left(\delta_{ki}x^j+\delta_{kj}x^i-\delta_{ij}x^k \right)}{(1-|x|^2)},~~ \mbox{for all}~~ x\in \mathbb{D}.
  \end{eqnarray}
   Clearly,
  \begin{equation}\label{eqn01} 
 \bar{\Gamma}^k_{ij}(x)=\bar{\Gamma}^k_{ji}(x).
  \end{equation}
Using expression for $b_i$ from \eqref{eqn2.5.120}, we get,
   \begin{equation}\label{eqnA1}
    \frac{\partial b_i}{\partial x^j}=\frac{\partial b_j}{\partial x^i}=\frac{(1-|x|^2)\delta_{ij}+2x^ix^j}{(1-|x|^2)^2}.
\end{equation}
Substituting \eqref{eqn01} and \eqref{eqnA1}
in \eqref{eqnn 4.3.48}, we obtain
\begin{equation}\label{eqnA2}
   b_{i|j} =b_{j|i}=\frac{(1+|x|^2)\delta_{ij}-2x^ix^j}{(1-|x|^2)^2}.
\end{equation}
Employing \eqref{eqnA2} in \eqref{eqnn 4.3.50}  yields,
\begin{equation}\label{eqnA3}
  s_{ij}=0,~ r_{ij}=b_{i|j}=\frac{(1+|x|^2)\delta_{ij}-2x^ix^j}{(1-|x|^2)^2},~\forall i,j=1,2.
\end{equation}
Applying  \eqref{eqnA3} in \eqref{eqnn 4.3.51} and \eqref{eqnn 4.3.52}, we have
 \begin{equation}\label{eqnA4}
  s^i_j=0,~ s_j=0, ~ \mbox{and}~~ e_{ij}=r_{ij}=\frac{(1+|x|^2)\delta_{ij}-2x^ix^j}{(1-|x|^2)^2}.
 \end{equation}
Let $G^i=G^i(x,\xi)$ and $\bar{G}^i=\bar{G}^i(x,\xi)$ denote the spray coefficients of $\mathcal{F_A}$ and $\alpha$ respectively. Then  $G^i$ and $\bar{G}^i$ are related by (see \cite[\S 2.3, equation (2.19)]{CXSZ})
\begin{equation}\label{eqnA5}
G^i=\bar{G}^i+P \xi^i+Q^i,
\end{equation}  
where
\begin{equation}\label{eqnA6}
P:=\frac{e_{00}}{2\mathcal{F}_A}-s_0, ~~ ~~  Q^i:=\alpha s^i_0  ~~ \mbox{and}~~ \bar{G}^i=\frac{1}{2}\bar{\Gamma}^i_{jk}\xi^j\xi^k,
\end{equation}
and  where $e_{00}:=e_{ij}\xi^i\xi^j$, $s_0:=s_i\xi^i$ and $s^i_0:=s_j^i \xi^j$.\\
 Therefore from \eqref{eqn0} , \eqref{eqnA4} and \eqref{eqnA6} we find  that,
 \begin{equation}\label{eqnA7}
 P=\frac{e_{00}}{2\mathcal{F}_A}=\frac{r_{ij}\xi^i\xi^j}{2\mathcal{F}_A}=\frac{(1+|x|^2)|\xi|^2-2\langle x,\xi\rangle^2}{2\mathcal{F}_A(1-|x|^2)^2},
 \end{equation}
 \begin{equation}\label{eqnA8}
 Q^i=0 ~ \mbox{and}~~ \bar{G}^i=\frac{\Big[ 2\xi^i\langle x, \xi \rangle-|\xi|^2 x^i\Big]}{1-|x|^2},~  i=1,2.
 \end{equation}
 Employing \eqref{eqnA7}, \eqref{eqnA8} in  \eqref{eqnA5}, we see that the spray coefficients are given by
 \begin{equation}\label{eqnA9}
 G^i=\frac{\Big[ 2\xi^i\langle x, \xi \rangle-|\xi|^2 x^i\Big]}{1-|x|^2}+\frac{(1+|x|^2)|\xi|^2-2\langle x,\xi\rangle^2}{2\mathcal{F}_A(1-|x|^2)^2}\xi^i. 
 \end{equation}
 Therefore, the  Apollonian weak-Finsler structure $\mathcal{F}_A$ is projectively flat (see Proposition \ref{ppn2.3}).\\
 From \eqref{eqn2.5.121} and \eqref{eqnA0} for $\mathcal{F}_A$ we have 
 \begin{equation}\label{rho}
     \rho=\frac{1}{2}\log\left(1-|x|^2 \right).
 \end{equation}
 Therefore,
 \begin{equation}\label{rho0}
     \rho_0=\rho_{x^i}\xi^i=-\frac{\langle x,\xi \rangle}{\left(1-|x|^2 \right)}.
 \end{equation}
Availing \eqref{rho0} and \eqref{eqnA7} in \eqref{eqnn 4.65} for $n=2$, we obtain $S$-curvature as:
 \begin{equation*}
      \textbf{S}= \frac{3|\xi|\Big[ (1+|x|^2)|\xi|+2\langle x,\xi\rangle\Big]}{2\mathcal{F}_A(1-|x|^2)^2}.
    \end{equation*}
\end{proof}

\begin{proof}[Proof of Theorem \ref{thm1.3}] From Theorem \ref{thm4.1} we have
    \begin{eqnarray*}
         S-\frac{3}{2}\mathcal{F}_A&=&\frac{3|\xi|\Big[ (1+|x|^2)|\xi|+2\langle x,\xi\rangle\Big]}{2\mathcal{F}_A(1-|x|^2)^2}-\frac{3}{2}\Big[\frac{|\xi|}{1-|x|^2}+\frac{\langle x,\xi\rangle}{1-|x|^2}\Big]\\
         &=&\frac{3\Big[|x|^2|\xi|^2-\langle x,\xi\rangle^2\Big]}{2\mathcal{F}_A(1-|x|^2)^2}\geq 0. 
    \end{eqnarray*}
     Hence, $$S \geq \frac{3}{2}\mathcal{F}_A.$$
\end{proof}
\subsection{Ricci and Flag curvature of the Apollonian weak-Finsler structure $\mathcal{F}_A$}\label{sb4}
In this subsection we explicitly find out the Ricci and Flag curvature of the Apollonian weak-Finsler structure $\mathcal{F}_A$.
\begin{thm}\label{thm4.2}
\textnormal{Let  $\mathcal{F}_A$ be the Apollonian weak-Finsler structure on the unit disc $\mathbb{D}$ given by \eqref{x6}, then the Riemann curvature $R^i_k$, the Ricci curvature  $\textbf{Ric}$, and the flag curvature $\textbf{K}$ of  $\mathcal{F}_A$  are respectively given by}
\begin{equation}
 R^i_k=- 4\left(  \delta^i_k \alpha^2-\alpha \alpha_k\xi^i \right)+\Bigg[ 3\left( \frac{\phi}{2\mathcal{F}_A}\right)^2  -\frac{\psi}{2\mathcal{F}_A}\Bigg]\left( \delta^i_k-\frac{(\mathcal{F}_A)_{\xi^k}}{\mathcal{F}_A}\xi^i\right) +\tau_k \xi^i,
 \end{equation}
  \textnormal{with}
$$\phi=\frac{(1+|x|^2)|\xi|^2-2\langle x,\xi\rangle^2}{(1-|x|^2)^2},$$
$$\psi=\frac{-2\left(1+3|x|^2 \right)|\xi|^2\langle x,\xi \rangle+8\langle x,\xi \rangle^3}{\left(1-|x|^2 \right)^3}, ~~\textnormal{and} ~~~ \tau_k=\frac{4\Big[|\xi|^2 x^k-\langle x,\xi \rangle \xi^k\Big]}{\mathcal{F}_A\left(1-|x|^2 \right)^3},$$
\begin{equation*}
    \textbf{Ric}=\frac{\Big[3(1+|x|^2)^2-16\Big]|\xi|^4+(12|x|^2-28)|\xi|^3\langle x,\xi\rangle-24|\xi|^2\langle x,\xi\rangle^2-16|\xi|\langle x,\xi\rangle^3-4\langle x,\xi\rangle^4}{4\mathcal{F}_A^{2}(1-|x|^2)^4},
\end{equation*}
\textnormal{and}
\begin{equation*}\label{.00}
  \textbf{K}  =\frac{\Big[3(1+|x|^2)^2-16\Big]|\xi|^4+(12|x|^2-28)|\xi|^3\langle x,\xi\rangle-24|\xi|^2\langle x,\xi\rangle^2-16|\xi|\langle x,\xi\rangle^3-4\langle x,\xi\rangle^4}{4\mathcal{F}_A^{4}(1-|x|^2)^4}.
\end{equation*}

 \end{thm}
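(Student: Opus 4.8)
The plan is to use the projective decomposition of the spray of $\mathcal{F}_A$ obtained in the proof of Theorem~\ref{thm4.1}: since $\beta=df$ is closed, $s_{ij}=0$ and hence $Q^i=\alpha s^i_0=0$, so \eqref{eqnA5} reduces to $G^i=\bar{G}^i+P\,\xi^i$ with $P=\dfrac{e_{00}}{2\mathcal{F}_A}=\dfrac{\phi}{2\mathcal{F}_A}$, where $\bar{G}^i$ are the spray coefficients \eqref{eqnA8} of the Poincar\'e metric $\alpha$ and $\phi=e_{00}=r_{ij}\xi^i\xi^j$ as in \eqref{eqnA7}. Because $\alpha$ has constant curvature $-1$, \eqref{eqn2.1.12} gives its Riemann curvature $\bar{R}^i_k=-\big(\alpha^2\delta^i_k-\alpha\,\alpha_{\xi^k}\xi^i\big)$, which is precisely the first summand in the asserted expression for $R^i_k$; thus the task is to identify the contribution of the projective term $P\xi^i$.

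To do this I would substitute $G^i=\bar{G}^i+P\,\xi^i$ into the defining formula \eqref{eqn2.2.10} for $R^i_k$ and expand the four terms. Using that $\bar{G}^i$ is $2$-homogeneous and $P$ is $1$-homogeneous in $\xi$, so that Euler's relation collapses every combination of the form $\xi^m\frac{\partial}{\partial\xi^m}$ applied to $\bar{G}^i$ or to $P$, the part built solely out of $\bar{G}^i$ reassembles into $\bar{R}^i_k$, while the remaining terms organize as $\Xi\big(\delta^i_k-\tfrac{(\mathcal{F}_A)_{\xi^k}}{\mathcal{F}_A}\xi^i\big)+\tau_k\xi^i$. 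Collecting the coefficient of $\delta^i_k$ one finds $\Xi=P^2-\xi^m\frac{\partial P}{\partial x^m}+2\bar{G}^m\frac{\partial P}{\partial\xi^m}$; writing $P=\phi/(2\mathcal{F}_A)$ and invoking the two identities $\xi^m\frac{\partial\phi}{\partial x^m}-2\bar{G}^m\frac{\partial\phi}{\partial\xi^m}=e_{00|0}$ and $\xi^m\frac{\partial\mathcal{F}_A}{\partial x^m}-2\bar{G}^m\frac{\partial\mathcal{F}_A}{\partial\xi^m}=e_{00}=\phi$ (the first because $e_{ij}=b_{i|j}$; the second because $b_{0|0}=e_{00}$ and the geodesic spray of $\alpha$ annihilates $\alpha$) gives $\Xi=3P^2-\dfrac{\psi}{2\mathcal{F}_A}$ with $\psi:=e_{00|0}$, the $\alpha$-covariant derivative of $\phi=e_{00}$ in the direction $\xi$. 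Plugging in the explicit Christoffel symbols \eqref{eqn0}, the explicit $\bar{G}^i$ \eqref{eqnA8} and $\phi$ \eqref{eqnA7} and differentiating the resulting rational functions of $x$ yields the closed form of $\psi$; collecting the remaining ($\xi^i$-proportional) terms of the expansion and simplifying in the same way produces $\tau_k$. This bookkeeping, together with the explicit differentiation needed to evaluate $\psi$ and $\tau_k$, is the main (routine but error-prone) obstacle.

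Finally I would contract. From $R^i_k=\bar{R}^i_k+\Xi\big(\delta^i_k-\tfrac{(\mathcal{F}_A)_{\xi^k}}{\mathcal{F}_A}\xi^i\big)+\tau_k\xi^i$ and Euler's relation $(\mathcal{F}_A)_{\xi^m}\xi^m=\mathcal{F}_A$, setting $i=k$ and summing gives $\textbf{Ric}=R^m_m=\bar{R}^m_m+\Xi\,(n-1)+\tau_m\xi^m$; here $\tau_m\xi^m=0$ since $\big(|\xi|^2x^m-\langle x,\xi\rangle\xi^m\big)\xi^m=0$, and $\bar{R}^m_m=-(n-1)\alpha^2$, so with $n=2$ we obtain $\textbf{Ric}=-\alpha^2+3P^2-\dfrac{\psi}{2\mathcal{F}_A}$. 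Inserting $\alpha^2=|\xi|^2/(1-|x|^2)^2$, $P=\phi/(2\mathcal{F}_A)$ and the explicit $\psi$, and clearing denominators using $\mathcal{F}_A(1-|x|^2)=|\xi|+\langle x,\xi\rangle$, collapses this to the single fraction with denominator $4\mathcal{F}_A^{2}(1-|x|^2)^4$ claimed in the theorem. Since $\dim\mathbb{D}=2$ the flag curvature does not depend on the flag (equivalently $\mathcal{F}_A$ is of scalar flag curvature, being projectively flat by Theorem~\ref{thm4.1}), so contracting \eqref{eqn2.1.12} gives $\textbf{Ric}=(n-1)\mathcal{F}_A^2\,\textbf{K}=\mathcal{F}_A^2\,\textbf{K}$; dividing through yields $\textbf{K}=\textbf{Ric}/\mathcal{F}_A^2$, which is the last displayed formula.
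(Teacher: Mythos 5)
Your proposal is correct and takes essentially the same route as the paper. The only difference is one of packaging: the paper quotes Cheng--Shen's ready-made formulas \eqref{eqnn 4.3.70} and \eqref{eqnn 4.3.71} for the Riemann and Ricci curvatures of a Randers structure with closed $\beta$, whereas you re-derive that formula by substituting the projective spray relation $G^i=\bar{G}^i+P\xi^i$ (already obtained in the proof of Theorem \ref{thm4.1}) into \eqref{eqn2.2.10}; your identification $\Xi=3P^2-\psi/(2\mathcal{F}_A)$, the explicit evaluation of $\phi$, $\psi$, $\tau_k$, the trace giving $\textbf{Ric}=-\alpha^2+\Xi$, and the passage to $\textbf{K}=\textbf{Ric}/\mathcal{F}_A^{2}$ in dimension two all coincide with the paper's computation.
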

 \begin{proof} The Riemannian curvature of the Randers structure $F=\alpha+\beta$ with closed $1$-form $\beta$ on an $n$-dimensional manifold is given by (see \cite[$\S 5.2$, equation $(5.10)$]{CXSZ})
 \begin{equation}\label{eqnn 4.3.70}
 R^i_k=\overline{R^i_k}+\Bigg[ 3\left( \frac{\phi}{2F}\right)^2  -\frac{\psi}{2F}\Bigg]\left( \delta^i_k-\frac{F_{\xi^k}}{F}\xi^i\right) +\tau_k \xi^i,
 \end{equation}
 
where
\begin{equation}\label{eqnn 4.3.72}
\phi:=b_{i|j}\xi^i\xi^j,~~~\psi:=b_{i|j|k}\xi^i\xi^j\xi^k,~~~\tau_k:=\frac{1}{F}\left( b_{i|j|k}-b_{i|k|j}\right) \xi^i\xi^j,
\end{equation}
and 
\begin{equation}\label{eqnn 4.3.720}
 b_{i|j|k}=\frac{\partial b_{i|j}}{\partial x^k}-b_{i|m}\bar{\Gamma}^m_{jk}-b_{j|m}\bar{\Gamma}^m_{ik}.   
\end{equation}
Here $\overline{R^i_k}$   denotes the Riemann curvature of the Riemannian metric $\alpha$.\\
It is well known that the Gaussian curvature of the Riemannian metric $\alpha=\frac{|\xi|}{1-|x|^2}$   is $-4$. Therefore, from \eqref{eqn2.1.12} the Riemann curvature of the  metric $\alpha$ is given by 
 \begin{equation}\label{eqnn 4.3.710}
\overline{R^i_k}=- 4\left(  \delta^i_k \alpha^2-\alpha \alpha_k\xi^i \right),~ \alpha_k:=\frac{\partial \alpha}{\partial \xi ^k}.
\end{equation} 
Considering \eqref{eqnn 4.3.70} with \eqref{eqnn 4.3.710}, we get the desired expression for the Riemann curvature.\\
In view of \eqref{eqnA3}, \eqref{eqnn 4.3.720} for the Apollonian weak-Finsler structure $\mathcal{F}_A$ in dimension $2$, the functions $\phi$ and $\psi$ can be explicitly calculated as follows:
 \begin{eqnarray}
\label{eqnn 4.3.74} \phi&=&b_{i|j}\xi^i\xi^j=(1+|x|^2)\alpha^2-2\beta^2=\frac{(1+|x|^2)|\xi|^2-2\langle x,\xi\rangle^2}{(1-|x|^2)^2},
\end{eqnarray}
\begin{eqnarray}
\mbox{and}~~~~\nonumber \psi&=&b_{i|j|k}\xi^i\xi^j\xi^k=\left(\frac{\partial b_{i|j}}{\partial x^k}-b_{i|m}\bar{\Gamma}^m_{jk}-b_{j|m}\bar{\Gamma}^m_{ik}\right)\xi^i\xi^j\xi^k\\
\label{eqnn 4.3.75} &=&-2\left(1+3|x|^2 \right)\alpha^2\beta+8\beta^3=\frac{-2\left(1+3|x|^2 \right)|\xi|^2\langle x,\xi \rangle+8\langle x,\xi \rangle^3}{\left(1-|x|^2 \right)^3}.
\end{eqnarray}
Here, $\displaystyle \alpha=\frac{|\xi|}{(1-|x|^2)}$ and $\displaystyle \beta=\frac{\langle x,\xi \rangle}{(1-|x|^2)}$ (see Theorem \ref{thm01}).\\

\noindent Applying \eqref{eqnA3}, \eqref{eqnn 4.3.720} in \eqref{eqnn 4.3.72}, we obtain
\begin{eqnarray}
 \label{eqnn 4.3.76} \tau_k=\frac{4\Big[\langle x,\xi \rangle \xi^k-|\xi|^2 x^k\Big]}{\mathcal{F}_A\left(1-|x|^2 \right)^3}.
\end{eqnarray}
Further,  the Ricci curvature of the Randers structure $F=\alpha+\beta$ with $\beta$ closed $1$-form is given by (see \cite[$\S 5.2$, equation $(5.12)$]{CXSZ})
\begin{equation}\label{eqnn 4.3.71}
\textbf{Ric} =\overline{\textbf{Ric}} +(n-1)\Bigg[3\left( \frac{\phi}{2F}\right)^2  -\frac{\psi}{2F} \Bigg],
\end{equation} 
where $\overline{\textbf{Ric}}$   denotes the Ricci curvature of the Riemannian metric $\alpha$.

\noindent Here, since $\alpha$ is the Poincar\'e metric on the unit disc, the Ricci curvature of the Riemannian metric $\alpha$ is given by
\begin{equation}\label{eqnn 4.3.711}
\overline{\textbf{Ric}}=\overline{R^i_i}=-4\alpha^2.
\end{equation} 
And consequently, the Ricci curvature of the
the Apollonian weak-Finsler structure $\mathcal{F}_A$ in dimension $2$ is given by
\begin{equation}\label{eqnn 4.3.712}
\textbf{Ric}= R^i_i=-4\alpha^2+\Bigg[ 3\left( \frac{\phi}{2\mathcal{F}_A}\right)^2  -\frac{\psi}{2\mathcal{F}_A}\Bigg],
 \end{equation}
 where $\phi$ , $\psi$ are respectively given by \eqref{eqnn 4.3.74} and \eqref{eqnn 4.3.75}. 
 After simplification, which can be rewritten as

\begin{eqnarray}
\nonumber \textbf{Ric}&=&\frac{3{\phi^2}-{2\psi}{\mathcal{F}_A-16{\alpha^2}{\mathcal{F}_A^2}}}{4\mathcal{F}_A^2}\\
\nonumber &=&\frac{3\Big[(1+|x|^2)\alpha^2-2\beta^2\Big]^2 -2\Big[-2\left(1+3|x|^2 \right)\alpha^2\beta+8\beta^3\Big]\left(\alpha+\beta \right)-16\alpha^2 \left(\alpha+\beta \right)^2}{4\mathcal{F}_A^{2}}\\
\label{Ric} &=&\frac{\Big[3(1+|x|^2)^2-16\Big]\alpha^4+(12|x|^2-28)\alpha^3\beta-24\alpha^2\beta^2-16\alpha\beta^3-4\beta^4}{4\mathcal{F}_A^{2}}.
\end{eqnarray}
which gives
\begin{equation}\label{eqnn 4.3.79}
    \textbf{Ric}=\frac{\Big[3(1+|x|^2)^2-16\Big]|\xi|^4+(12|x|^2-28)|\xi|^3\langle x,\xi\rangle-24|\xi|^2\langle x,\xi\rangle^2-16|\xi|\langle x,\xi\rangle^3-4\langle x,\xi\rangle^4}{4\mathcal{F}_A^{2}(1-|x|^2)^4}.
\end{equation}
In view of equation \eqref{eqn2.1.12} and \eqref{eqnn 4.3.79}, the flag curvature of the Apollonian weak-Finsler structure $\mathcal{F}_A$ is given by
\begin{equation*}
  \textbf{K}  =\frac{\Big[3(1+|x|^2)^2-16\Big]|\xi|^4+(12|x|^2-28)|\xi|^3\langle x,\xi\rangle-24|\xi|^2\langle x,\xi\rangle^2-16|\xi|\langle x,\xi\rangle^3-4\langle x,\xi\rangle^4}{4\mathcal{F}_A^{4}(1-|x|^2)^4}.
\end{equation*}
\end{proof}

\begin{proof}[Proof of the Theorem \ref{thm1.4}]
Since the flag curvature $K$ and Ricci curvature $\textbf{Ric}$ of the Apollonian weak-Finsler structure $\mathcal{F}_A$ are related by $\textbf{K}=\frac{\textbf{Ric}}{\mathcal{F}_A^2}$. In view of \eqref{Ric}, we obtain
\begin{eqnarray}
 \nonumber   \textbf{K}+1&=&\frac{\textbf{Ric}}{\mathcal{F}_A^2}+1\\
\nonumber    &=&\frac{\Big[3(1+|x|^2)^2-16\Big]\alpha^4+(12|x|^2-28)\alpha^3\beta-24\alpha^2\beta^2-16\alpha\beta^3-4\beta^4+4(\alpha+\beta)^4}{4\mathcal{F}_A^4}\\
\nonumber &=&\frac{3\alpha^2[(1+|x|^2)\alpha+2\beta ]^2-12\alpha^2\mathcal{F}_A^2}{4\mathcal{F}_A^4}\\
&=& \frac{3\alpha^2[\{(1+|x|^2)\alpha+2\beta \}^2-4\mathcal{F}_A^2]}{4\mathcal{F}_A^4}.
\end{eqnarray}
Since,  $|x| < 1$, therefore $\{(1+|x|^2)\alpha+2\beta\}^2-4\mathcal{F}_A^2<0$ .\\
Hence,
$$\textbf{K} +1 < 0, ~\mbox{i.e.},~ \textbf{K} < -1.$$
Similarly, applying \eqref{Ric}, $\textbf{K}$ can be rewritten as
\begin{eqnarray*}
\nonumber    \textbf{K}  &=&\frac{3\alpha^2 \Big[(1+|x|^2)\alpha+2\beta \Big]^2-4\mathcal{F}_A^4-12\alpha^2\mathcal{F}_A^2}{4\mathcal{F}_A^4}.\\
\end{eqnarray*}

Taking $\theta$ as the angle between the position vector $x$ and tangent vector $\xi$, we have $\langle x,\xi\rangle=|x||\xi|\cos{\theta}$, and using $\displaystyle \alpha=\frac{|\xi|}{(1-|x|^2)}$ and $\displaystyle \beta=\frac{\langle x,\xi \rangle}{(1-|x|^2)}=\frac{|x||\xi|\cos{\theta}}{1-|x|^2}$, the above expression can be rewritten as
\begin{equation*}
    \textbf{K}=\frac{3(1+|x|^2+2|x|\cos\theta)^2-4(1+|x|\cos\theta)^4-12(1+|x|\cos\theta)^2}{4(1+|x|\cos\theta)^4},
\end{equation*}
that is,
\begin{equation} \label{K}
   \textnormal{K}= \frac{3(1+|x|^2+2|x|\cos\theta)^2}{4(1+|x|\cos\theta)^4}-\frac{3}{(1+|x|\cos{\theta})^2}-1 .
\end{equation}
Now if we take the angle between $x$ and $\xi$ as $180^{\circ}$, i.e.,  the vector $\xi$ in the opposite direction of the position vector $x$, we have \eqref{K}
\begin{equation}
     \textbf{K} = \frac{3}{4}-\frac{3}{(1-|x|)^2}-1.
\end{equation}
Clearly as $|x|$ approaches to $1$, i.e., as the point $x$ moves towards boundary the flag curvature $\textbf{K}$ approaches to $-\infty$.

\end{proof}

\begin{rem}
  \textnormal{  In view of \eqref{K}, it is clear that the both bounds of the flag curvature will never be attained. In fact, both bounds will be approached towards the boundary of the disc. More precisely, when the position vector and the tangent vector are orthogonal, the curvature has the maximum limit $-1$ and when the position vector and the tangent vector are in opposite direction the curvature will approach very fast towards $-\infty$, which is the key feature of the flag curvature that it depends on the point as well as on the direction.}
\end{rem}
\section{Some other aspects of the Apollonian weak-Finsler structure}\label{sec5}
In this section, we discuss the geometric realization of the Apollonian weak-Finsler structure on the unit disc.
We also investigate the associated Zermelo navigation data for this structure on the unit disc $\mathbb{D}$.
\subsection{The realization of Apollonian weak-Finsler structure on the upper sheet  of the hyperboloid of two sheets}
In this subsection, we show that the Apollonian weak-Finsler structure on the unit disc can be realized as the pullback of a non-positive definite Randers structure in the upper half space on the upper sheet of the hyperboloid of two sheets. \\\\
Let $\mathbb{R}_+^3 = \left\lbrace(\tilde{x}^1,\tilde{x}^2, \tilde{x}^3)\in \mathbb{R}^3 : \tilde{x}^3 > 0\right\rbrace $ be the upper half space with the Lorentzian  metric $\alpha_L$, defined by $\alpha_L(\tilde{x},\tilde{\xi})=\sqrt{(\tilde{\xi}^1)^2+(\tilde{\xi}^2)^2-(\tilde{\xi}^3)^2}$ with $\tilde{x}\in \mathbb{R}_+^3 $ and $\tilde{\xi} \in T_{\tilde{x}}\mathbb{R}_+^3\cong \mathbb{R}^3$ and a $1$-form $\beta_L=\frac{1}{1+\tilde{x}^3}d\tilde{x}^3$ in $\mathbb{R}_+^3$. Now consider the deformation $F_L$ of   $\alpha_L$ by  the $1$-form $\beta_L=\frac{1}{1+\tilde{x}^3}d\tilde{x}^3$ in $\mathbb{R}_+^3$ as follows: $F_L(\tilde{x},\tilde{\xi}) =\alpha_L(\tilde{x},\tilde{\xi})+\beta_L(\tilde{x},\tilde{\xi})$. We parametrize the upper half portion $\mathbb{H_+} = \left\lbrace(\tilde{x}^1, \tilde{x}^2, \tilde{x}^3)\in \mathbb{R}^3 : \tilde{x}^3 = \sqrt{1+(\tilde{x}^1)^2+(\tilde{x}^2)^2} \right\rbrace,$ 
   of the hyperboloid of two sheets in $\mathbb{R}^3$ as follows:
\begin{equation}\label{eqn 3.A3}
\pi : \mathbb{D} \rightarrow \mathbb{H}_+,~~~~~~\pi(x)=\left( \frac{2x}{{1-|x|^2}},\frac{1+|x|^2}{1-|x|^2}\right) . 
\end{equation}
\begin{proposition}
   \textnormal{ The pullback of the metric $F_L$ deﬁned as above, on the upper
sheet of the hyperboloid of two sheets, by the map $\pi$ is the realization of the Apollonian weak-Finsler structure on the unit disc , that is, $\pi^*F_L=\mathcal{F}_A$}.
\end{proposition}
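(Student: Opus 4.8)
The plan is to verify directly that $\pi^*F_L = \mathcal{F}_A$ by computing the pullback of both the Lorentzian part $\alpha_L$ and the $1$-form $\beta_L$ under the parametrization $\pi$, and checking that they match the Poincar\'e metric $\alpha$ and the closed $1$-form $\beta = df$ appearing in Theorem~\ref{thm01}, respectively. Since $F_L = \alpha_L + \beta_L$ and pullback commutes with the sum, it suffices to show $\pi^*\alpha_L = \alpha$ and $\pi^*\beta_L = \beta$ separately.

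First I would compute the differential $d\pi_x : T_x\mathbb{D} \to T_{\pi(x)}\mathbb{H}_+$. Writing $\pi(x) = \left(\frac{2x^1}{1-|x|^2}, \frac{2x^2}{1-|x|^2}, \frac{1+|x|^2}{1-|x|^2}\right)$, a direct differentiation gives, for $\xi \in T_x\mathbb{D}$,
\begin{equation*}
\tilde{\xi}^a = \frac{\partial \pi^a}{\partial x^j}\xi^j = \frac{2\xi^a}{1-|x|^2} + \frac{4\langle x,\xi\rangle\, x^a}{(1-|x|^2)^2}\quad (a=1,2), \qquad \tilde{\xi}^3 = \frac{4\langle x,\xi\rangle}{(1-|x|^2)^2}.
\end{equation*}
Then I would substitute into $\alpha_L^2(\pi(x),\tilde\xi) = (\tilde\xi^1)^2 + (\tilde\xi^2)^2 - (\tilde\xi^3)^2$ and simplify; the cross terms and the $(\tilde\xi^3)^2$ term should cancel the messy pieces, leaving $\frac{4|\xi|^2}{(1-|x|^2)^2}$, so that $\pi^*\alpha_L = \frac{2|\xi|}{1-|x|^2}$. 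This is \emph{twice} the Poincar\'e metric $\alpha$, so the claimed identity as literally stated needs the convention that $\pi$ maps onto the hyperboloid of ``radius'' matching a factor-$2$ normalization, or one rescales $\pi$; I would either absorb the factor $2$ into the definition of $\alpha_L$ (equivalently note the hyperboloid model here carries $\tfrac14$ of the induced metric) or simply record that the constant is harmless and present the computation that produces exactly $\mathcal{F}_A$. For $\beta_L = \frac{1}{1+\tilde x^3}d\tilde x^3$, since $\tilde x^3 \circ \pi = \frac{1+|x|^2}{1-|x|^2}$, we get $1 + \tilde x^3\circ\pi = \frac{2}{1-|x|^2}$ and $d(\tilde x^3\circ\pi) = d\!\left(\frac{1+|x|^2}{1-|x|^2}\right) = \frac{4\langle x,\,dx\rangle}{(1-|x|^2)^2}$, hence $\pi^*\beta_L = \frac{1-|x|^2}{2}\cdot\frac{4\langle x,dx\rangle}{(1-|x|^2)^2} = \frac{2\langle x,dx\rangle}{1-|x|^2}$, which is $2\beta$ with $\beta = df$, $f = -\tfrac12\log(1-|x|^2)$ as in Theorem~\ref{thm01}.

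Putting the two computations together yields $\pi^*F_L = \pi^*\alpha_L + \pi^*\beta_L = \frac{2|\xi|}{1-|x|^2} + \frac{2\langle x,\xi\rangle}{1-|x|^2} = 2\,\mathcal{F}_A$, so with the appropriate normalization of $\alpha_L$ (or of $\pi$) one obtains exactly $\pi^*F_L = \mathcal{F}_A$; I would state the proposition with whichever normalization makes the constant disappear and carry out the above substitution cleanly. I expect the main obstacle to be purely computational: the simplification of $(\tilde\xi^1)^2 + (\tilde\xi^2)^2 - (\tilde\xi^3)^2$, where one must correctly expand $\left(\frac{2\xi^a}{1-|x|^2} + \frac{4\langle x,\xi\rangle x^a}{(1-|x|^2)^2}\right)^2$ summed over $a$, use $\sum_a (x^a)^2 = |x|^2$ and $\sum_a \xi^a x^a = \langle x,\xi\rangle$, and see the terms proportional to $\langle x,\xi\rangle^2$ telescope against $(\tilde\xi^3)^2 = \frac{16\langle x,\xi\rangle^2}{(1-|x|^2)^4}$. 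A secondary point to handle carefully is confirming that $F_L$ restricted to $T\mathbb{H}_+$ is a genuine (possibly non-positive-definite) Randers structure and that $\pi$ is a diffeomorphism onto $\mathbb{H}_+$, but both are routine given the explicit formula \eqref{eqn 3.A3} and its inverse $x = \frac{(\tilde x^1,\tilde x^2)}{1+\tilde x^3}$.
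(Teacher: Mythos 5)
Your proposal is correct and follows essentially the same route as the paper's proof: compute $d\pi^1,d\pi^2,d\pi^3$ explicitly, check that $(d\pi^1)^2+(d\pi^2)^2-(d\pi^3)^2$ collapses to $\frac{4|\xi|^2}{(1-|x|^2)^2}$, and that $\frac{1}{1+\pi^3}\,d\pi^3$ pulls back to $\frac{2\langle x,\xi\rangle}{1-|x|^2}$. Your observation that the raw pullback is $2\mathcal{F}_A$ rather than $\mathcal{F}_A$ is accurate and is in fact a point the paper glosses over --- its displayed computation silently inserts a factor $\frac{1}{2}$ in front of $\pi^*F_L$ without justification --- so your suggestion to absorb the constant by rescaling $F_L$ (or $\pi$) is the honest way to make the stated identity literally true.
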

\begin{proof}
  In view of \eqref{eqn 3.A3}\\
  \begin{equation*}
      \pi^1(x) = \frac{2x^1}{{1-|x|^2}} , ~ \pi^2(x) = \frac{2x^2}{{1-|x|^2}} ~ , ~ \pi^3(x) = \frac{1+|x|^2}{{1-|x|^2}} .
 \end{equation*}
 Therefore,
 \begin{eqnarray*}
     d\pi^1&=&\frac{2}{(1-|x|^2)^2}\left[ \left\lbrace 1- |x|^2+2(x^1)^2\right\rbrace dx^1 + 2x^1x^2 dx^2\right],\\
     d\pi^2&=&\frac{2}{(1-|x|^2)^2}\left[  2x^1x^2 dx^1 + \left\lbrace 1- |x|^2+2(x^2)^2\right\rbrace dx^2\right],\\
     d\pi^3&=&\frac{2}{(1-|x|^2)^2}\left[ 2 x^1 dx^1 +2x^2 dx^2\right]. 
 \end{eqnarray*}
Hence,
\begin{eqnarray}
   \nonumber  \pi^*F_L(x,\xi)&=&\frac{1}{2}\left( \sqrt{(d\pi^1)^2+(d\pi^2)^2-(d\pi^3)^2}+\frac{1}{1+\pi^3}d\pi^3\right)(x,\xi)\\
 \nonumber   &=&\frac{|\xi|}{1-|x|^2}+\frac{\langle x ,  \xi \rangle}{1-|x|^2} = \mathcal{F}_A.
\end{eqnarray}
\end{proof}

\noindent
Thus, we have shown  that the pullback of  $F_L$ on the upper half of the hyperboloid of two sheets $\mathbb{H}_+$ gives  the Apollonian weak-Finsler structure on the unit disc .

\subsection{Zermelo navigation description of Apollonian weak-Finsler structure}
It is well known that any Randers structure on a manifold $M$ has a Zermelo navigation representation. For instant, if $F=\alpha+\beta$ is given the Randers structure with $\alpha=\sqrt{a_{ij}(x)\xi^i\xi^j}$ and differential $1$-form $\beta=b_i(x)\xi^i$, satisfying $||\beta||^2_\alpha=a^{ij}b_ib_j < 1$. Then the Zermelo Navigation for this Randers structure is the triple $(M,h,W)$, where  $h=\sqrt{h_{ij}\xi^i \xi^j}$ with
\begin{equation*}
    h_{ij}= c (a_{ij}-b_ib_j), ~~ W^i=-\frac{b^i}{c},~ b^i=a^{ij}b_j~ \mbox{and}~ c=1-||\beta||^2_\alpha.
\end{equation*}
Moreover, $||W||_h=||\beta||_{\alpha}$.\\
Also given the Zermelo data, we can get back the Randers structure. 
And this 1-1 correspondence is useful in finding the geodesics of the Randers structure. 
 See for more details  \cite[ Example $1.4.3$]{SSZ}.\\\\
In this subsection, we obtain the Zermelo data for the Apollonian weak-Finsler structure $\mathcal{F}_A$, which is a trivially a Randers structure.
We have,
 \begin{equation*}
    \mathcal{F}_A(x, \xi) = \frac{|\xi|}{1-|x|^2}+\frac{ \langle x,\xi \rangle}{(1-|x|^2)}.
   \end{equation*}
We need to find  $h_{ij}, W^i$ defined above. From 
\eqref{eqn2.5.121},  we have,  
\begin{equation}
c=  1- ||\beta||^2_{\alpha}=1-|x|^2.  
\end{equation}
Employing \eqref{eqn2.5.118} and \eqref{eqn2.5.120}, we see  
\begin{equation}\label{eqn 4.100A}
       h_{ij} =\frac{\delta_{ij}-x^ix^j}{1-|x|^2}.
 \end{equation} 
Clearly,
 \begin{equation}\label{eqn 4.101A}
   W=\left( W^i \right)=\left(- x^i  \right).
\end{equation}
Also, 
$$||W||^2_h=||\beta||^2_{\alpha}=|x|^2.$$
 Thus, we have 
 \begin{proposition}
    \textnormal{ The Zermelo Navigation data for the Apollonian weak-Finsler structure $\mathcal{F}_A$ on the unit disc $\mathbb{D}$ is given by $\left( \mathbb{D}, h, W\right)$, where the components of the Riemannian metric $h$ is given by \eqref{eqn 4.100A} and that of the vector field by \eqref{eqn 4.101A}.}
 \end{proposition}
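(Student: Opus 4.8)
The plan is to apply the general Randers-to-Zermelo dictionary recalled just above the statement to the concrete data of $\mathcal{F}_A = \alpha + \beta$, reusing the quantities already computed in Section~\ref{sec3}. Recall that for a Randers structure $F = \alpha + \beta$ with $\alpha = \sqrt{a_{ij}\xi^i\xi^j}$, $\beta = b_i\xi^i$ and $\|\beta\|^2_\alpha = a^{ij}b_ib_j < 1$, the navigation triple $(M,h,W)$ is given by $h_{ij} = c(a_{ij} - b_ib_j)$, $W^i = -b^i/c$ with $b^i = a^{ij}b_j$ and $c = 1 - \|\beta\|^2_\alpha$. So the whole proof is a substitution; nothing genuinely subtle is involved.

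First I would collect the ingredients from \eqref{eqn2.5.118}--\eqref{eqn2.5.121}, namely $a_{ij} = \delta_{ij}/(1-|x|^2)^2$, $a^{ij} = (1-|x|^2)^2\delta^{ij}$, $b_i = x^i/(1-|x|^2)$, and $\|\beta\|^2_\alpha = |x|^2$. Hence $c = 1 - |x|^2$. Raising the index on $\beta$ gives $b^i = a^{ij}b_j = (1-|x|^2)^2\delta^{ij}\cdot x^j/(1-|x|^2) = (1-|x|^2)x^i$, so that $W^i = -b^i/c = -x^i$, which is \eqref{eqn 4.101A}. For the navigation metric I would substitute into $h_{ij} = c(a_{ij} - b_ib_j)$:
\[
h_{ij} = (1-|x|^2)\Big(\frac{\delta_{ij}}{(1-|x|^2)^2} - \frac{x^ix^j}{(1-|x|^2)^2}\Big) = \frac{\delta_{ij} - x^ix^j}{1-|x|^2},
\]
which is exactly \eqref{eqn 4.100A}.

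Finally, as a consistency check I would verify $\|W\|^2_h = h_{ij}W^iW^j = (\delta_{ij} - x^ix^j)x^ix^j/(1-|x|^2) = (|x|^2 - |x|^4)/(1-|x|^2) = |x|^2 = \|\beta\|^2_\alpha$, matching the general identity $\|W\|_h = \|\beta\|_\alpha$, and I would remark that since $|x| < 1$ on $\mathbb{D}$ the matrix $(\delta_{ij} - x^ix^j)$ is positive definite (its eigenvalues are $1$ and $1-|x|^2$), so $h$ is a bona fide Riemannian metric on $\mathbb{D}$. There is no real obstacle: the proposition is a direct computation, and the only point requiring attention is the bookkeeping of the conformal factors $(1-|x|^2)$ when raising the index to form $b^i$ and when forming $a_{ij} - b_ib_j$.
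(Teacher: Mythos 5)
Your proposal is correct and follows essentially the same route as the paper: both simply substitute the data $a_{ij}$, $a^{ij}$, $b_i$ and $\|\beta\|^2_\alpha=|x|^2$ from \eqref{eqn2.5.118}--\eqref{eqn2.5.121} into the standard Randers-to-Zermelo formulas to obtain $c=1-|x|^2$, $h_{ij}=\frac{\delta_{ij}-x^ix^j}{1-|x|^2}$ and $W^i=-x^i$, and then note $\|W\|^2_h=\|\beta\|^2_\alpha$. Your added observation that $(\delta_{ij}-x^ix^j)$ is positive definite on $\mathbb{D}$ is a harmless and welcome extra check.
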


\noindent \textbf{Declarations:}\\

\noindent \textbf{Data availability:} Not applicable.\\
 
\noindent \textbf{Funding:} The corresponding author, Bankteshwar Tiwari, is supported by ``Incentive grant" under the IoE scheme of Banaras Hindu University, Varanasi (India) and the first author, Alok Kumar Pandey, is supported by ``CSIR Junior Research Fellowship''. S/No-132661.\\

\noindent \textbf{Competing Interests:} The authors have no relevant financial or non-financial interests to disclose.\\

\noindent \textbf{Author Contributions:} All authors contributed equally in the conceptualization and preparation of the paper. Further, all authors read and approved the final manuscript.\\

\end{document}